\def\R{{\mathbb {R}}}
\def\N{{\mathbb {N}}}
\def\A{{\mathcal {A}}}
\newtheorem{teo}{Theorem}[section]
\newtheorem{lema}[teo]{Lemma}
\newtheorem{prop}[teo]{Proposition}
\theoremstyle{remark}
\newtheorem{remark}[teo]{Remark}
\theoremstyle{definition}
\numberwithin{equation}{section}
\title[concentration-compactness for variable exponent]{The concentration-compactness principle for variable exponent spaces and applications}
\author[J. Fern\'andez Bonder and A. Silva]
{Juli\'an Fern\'andez Bonder and Anal\'{\i}a Silva}
\address{Juli\'an Fern\'andez Bonder \hfill\break\indent
Departamento  de Matem\'atica, FCEyN, Universidad de Buenos Aires, \hfill\break\indent Pabell\'on I, Ciudad Universitaria (1428), Buenos Aires, Argentina.}
\email{{\tt jfbonder@dm.uba.ar}\hfill\break\indent {\it Web page:} {\tt http://mate.dm.uba.ar/$\sim$jfbonder}}
\address{Anal\'{\i}a Silva \hfill\break\indent
Departamento  de Matem\'atica, FCEyN, Universidad de Buenos Aires, \hfill\break\indent Pabell\'on I, Ciudad Universitaria (1428), Buenos Aires, Argentina.}
\email{{\tt asilva@dm.uba.ar}}
\thanks{Supported by Universidad de Buenos Aires under grant X078, by ANPCyT PICT No. 2006-290 and CONICET (Argentina) PIP 5478/1438. J. Fern\'andez Bonder is a member of CONICET. Analia Silva is a fellow of CONICET}
\subjclass[2000]{35J20; 35J60}
\keywords{Concentration-compactness principle; Variable exponent spaces}
\begin{document}

\begin{abstract}
In this paper we extend the well-known concentration -- compactness principle of P.L. Lions to the variable exponent case. We also give some applications to the existence problem for the $p(x)-$Laplacian with critical growth.
\end{abstract}

\maketitle

\section{Introduction.}
When dealing with nonlinear elliptic equations with critical growth (in the sense of the Sobolev embeddings) the concentration -- compactness principle of P.L. Lions, see \cite{Lions}, have been proved to be a fundamental tool in order to prove existence of solutions. Just to cite a few, see \cite{Alves, Alves-Ding, Bahri-Lions, Drabek-Huang, FB, GAP} but there is an impressive list of references on this.

More recently in the analysis of some new models, that are called electrorheological fluids, the following equation has been studied,
\begin{equation}\label{pdex}
-\Delta_{p(x)} u = f(x,u) \qquad \mbox{in }\Omega.
\end{equation}
The operator $\Delta_{p(x)}u := \text{div}(|\nabla u|^{p(x)-2} \nabla u)$ is called the $p(x)-$Laplacian. When $p(x)\equiv p$ is the well-known $p-$Laplacian.

In recent years appeared a vast amount of literature that deal with the existence problem for \eqref{pdex} with different boundary conditions (Dirichlet, Neumann, nonlinear, etc). See, for instance \cite{Cabada-Pouso, Dinu, FZ, Mihailescu, Mihailescu-Radulescu} and references therein.

However, up to our knowledge, no results are available for \eqref{pdex} when the source term $f$ is allowed to have critical growth at infinity\footnote{see the remark after the introduction for more on this}. That is
$$
|f(x,t)|\le C(1+|t|^{q(x)})
$$
with $q(x)\le p^*(x):= N p(x)/(N-p(x))$ (if $p(x)<N$) and $\{q(x)=p^*(x)\}\not=\emptyset$.

This paper attempts to begin to fill this gap.

So, the objective of this paper is to extend the concentration -- compactness principle of P.L. Lions to the variable exponent setting.

The method of the proof follows the lines of the ones in the original work of P.L. Lions and the main novelty in our result is the fact that we do not require the exponent $q(x)$ to be critical everywhere. Moreover, we show that the delta masses are concentrated in the set where $q(x)$ is critical.

Finally, as an application of our result, we prove the existence of solutions to the problem
\begin{equation}\label{aplicacion 1}
\begin{cases}
-\Delta_{p(x)} u=|u|^{q(x)-2}u+\lambda(x)|u|^{r(x)-2}u & \mbox{in }\Omega\\
u=0&\mbox{ in }\partial\Omega
\end{cases}
\end{equation}
where $\Omega$ is a bounded smooth domain in $\R^N$,  $r(x)<p^*(x)-\delta$, $q(x)\leq p^*(x)$ with $\{q(x)=p^*(x)\}\not=\emptyset$.

\subsection{Statement of the results}

As we already mentioned, the main result of the paper is the extension of P.L. Lions concentration -- compactness method to the variable exponent case. More precisely, we prove,

\begin{teo}\label{ccp}
Let $q(x)$ and $p(x)$ be two continuous functions such that
$$
1<\inf_{x\in\Omega}p(x)\le \sup_{x\in\Omega}p(x) < n
\qquad \mbox{and}\qquad
1\le q(x)\le p^*(x)\quad \mbox{ in }\Omega.
$$
Let $\{u_j\}_{j\in\N}$ be a weakly convergent sequence in $W_0^{1,p(x)}(\Omega)$ with weak limit $u$, and such that:
\begin{itemize}
\item $|\nabla u_j|^{p(x)}\rightharpoonup\mu$ weakly-* in the sense of measures.

\item $|u_j|^{q(x)}\longrightarrow\nu$ weakly-* in the sense of measures.
\end{itemize}
Assume, moreover that $\A = \{x\in \Omega\colon q(x)=p^*(x)\}$ is nonempty. Then, for some countable index set $I$ we have:
\begin{align}
& \nu=|u|^{q(x)} + \sum_{i\in I}\nu_i\delta_{x_i}\quad \nu_i>0\\
& \mu \geq |\nabla u|^{p(x)} + \sum_{i\in I} \mu_i \delta_{x_i} \quad \mu_i>0\\
& S \nu_i^{1/p^*(x_i)} \leq \mu_i^{1/p(x_i)} \qquad \forall i\in I.
\end{align}
where $\{x_i\}_{i\in I}\subset \A$ and $S$ is the best constant in the Gagliardo-Nirenberg-Sobolev inequality for variable exponents, namely
$$
S = S_q(\Omega) :=\inf_{\phi\in C_0^{\infty}(\Omega)} \frac{\| |\nabla \phi| \|_{L^{p(x)}(\Omega)}}{\| \phi \|_{L^{q(x)}(\Omega)}}.
$$
\end{teo}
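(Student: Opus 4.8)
The plan is to follow P.L.\ Lions' original argument, adapting each ingredient to the variable exponent framework. First I would reduce to the case $u=0$. Set $v_j:=u_j-u$, so $v_j\rightharpoonup 0$ in $W_0^{1,p(x)}(\Omega)$. Since $p(x)<N$ on $\bar\Omega$ we have $p(x)<p^*(x)$, so the subcritical Sobolev embedding $W_0^{1,p(x)}(\Omega)\hookrightarrow L^{p(x)}(\Omega)$ is compact and $v_j\to 0$ strongly in $L^{p(x)}(\Omega)$; passing to a subsequence, $v_j\to 0$ a.e.\ and $|\nabla v_j|^{p(x)}\rightharpoonup\tilde\mu$, $|v_j|^{q(x)}\rightharpoonup\tilde\nu$ weakly-$*$ for some bounded nonnegative measures $\tilde\mu,\tilde\nu$. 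A variable-exponent Brezis--Lieb lemma (using the a.e.\ convergence, the continuity of $q$ and the $L^1$-boundedness of $|u_j|^{q(x)}$) gives $|u_j|^{q(x)}-|v_j|^{q(x)}-|u|^{q(x)}\to 0$ in $L^1(\Omega)$, hence $\nu=|u|^{q(x)}\,dx+\tilde\nu$.

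The core is the analysis of $\tilde\nu$ and $\tilde\mu$. For $\phi\in C_0^\infty(\Omega)$ apply the Sobolev inequality defining $S$ to $\phi v_j\in W_0^{1,p(x)}(\Omega)$:
$$
S\,\big\|\,\phi v_j\,\big\|_{L^{q(x)}(\Omega)}\le\big\|\,|\nabla(\phi v_j)|\,\big\|_{L^{p(x)}(\Omega)}\le\big\|\,\phi\nabla v_j\,\big\|_{L^{p(x)}(\Omega)}+\big\|\,v_j\nabla\phi\,\big\|_{L^{p(x)}(\Omega)}.
$$
The last term tends to $0$ by the strong $L^{p(x)}$ convergence of $v_j$. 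For the other two I would pass to the limit through the modulars: for each fixed $\lambda>0$ the function $(|\phi(x)|/\lambda)^{q(x)}$ is continuous with compact support, so $\int(|\phi|/\lambda)^{q(x)}|v_j|^{q(x)}\,dx\to\int(|\phi|/\lambda)^{q(x)}\,d\tilde\nu$, and likewise with $p(x)$ and $\tilde\mu$; combining this with the monotonicity of the modular in $\lambda$ yields $\|\phi v_j\|_{L^{q(x)}(\Omega)}\to\|\phi\|_{L^{q(x)}(\Omega,\tilde\nu)}$ and $\|\phi\nabla v_j\|_{L^{p(x)}(\Omega)}\to\|\phi\|_{L^{p(x)}(\Omega,\tilde\mu)}$, the norms being the Luxemburg norms with respect to these measures. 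We thus obtain the reverse Hölder inequality $S\,\|\phi\|_{L^{q(x)}(\Omega,\tilde\nu)}\le\|\phi\|_{L^{p(x)}(\Omega,\tilde\mu)}$ for all $\phi\in C_0^\infty(\Omega)$.

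From this inequality I would derive the structure of $\tilde\nu$, adapting Lions' measure-theoretic lemma. Testing with functions approximating indicators and using the elementary bounds relating the Luxemburg norm of an indicator to the measure of its set (through the extreme exponents $p^\pm,q^\pm$) produces an estimate $\tilde\nu(E)\le C\,\tilde\mu(E)^{\alpha}$ with $\alpha>1$ on sets of small $\tilde\mu$-measure; a standard partitioning argument then forces $\tilde\nu$ to vanish on any Borel set carrying no atom of $\tilde\mu$, so $\tilde\nu=\sum_{i\in I}\nu_i\delta_{x_i}$ with $I$ countable and $\nu_i>0$. To locate the atoms: if $q(x_i)<p^*(x_i)$ then by continuity $\sup_{B}q<\inf_{B}p^*$ on a small ball $B=B_\rho(x_i)$, so the compact embedding $W_0^{1,p(x)}(B)\hookrightarrow L^{q(x)}(B)$ applied to $\psi v_j$ ($\psi\in C_0^\infty(B)$) gives $v_j\to 0$ in $L^{q(x)}_{\mathrm{loc}}(B)$ and hence $\tilde\nu(B)=0$, a contradiction; thus $x_i\in\A$. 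Finally, taking cutoffs $\phi_\varepsilon\in C_0^\infty(\Omega)$ with $0\le\phi_\varepsilon\le1$, $\phi_\varepsilon(x_i)=1$, $\mathrm{supp}\,\phi_\varepsilon\subset B_\varepsilon(x_i)$ and letting $\varepsilon\to0$ in the reverse Hölder inequality (using the continuity of $p,q$ and that $\tilde\nu,\tilde\mu$ restricted to small balls around $x_i$ concentrate at $x_i$) gives $\|\phi_\varepsilon\|_{L^{q(x)}(\tilde\nu)}\to\nu_i^{1/q(x_i)}$ and $\|\phi_\varepsilon\|_{L^{p(x)}(\tilde\mu)}\to\mu_i^{1/p(x_i)}$ with $\mu_i:=\tilde\mu(\{x_i\})$, whence $S\,\nu_i^{1/p^*(x_i)}\le\mu_i^{1/p(x_i)}$ because $q(x_i)=p^*(x_i)$. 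To recover the inequality for $\mu$, note $\mu\ge|\nabla u|^{p(x)}\,dx$ by weak lower semicontinuity of $w\mapsto\int g|\nabla w|^{p(x)}$ ($g\ge0$ continuous), while the convexity inequality $|\nabla u_j|^{p(x)}\ge|\nabla v_j|^{p(x)}+p(x)|\nabla v_j|^{p(x)-2}\nabla v_j\cdot\nabla u$, integrated against a cutoff near $x_i$ and combined with the variable-exponent Hölder inequality and $\||\nabla u|\mathbf 1_{B_\varepsilon(x_i)}\|_{L^{p(x)}}\to0$, yields $\mu(\{x_i\})\ge\mu_i$; since $|\nabla u|^{p(x)}\,dx$ is nonatomic and the $x_i$ are distinct these combine to $\mu\ge|\nabla u|^{p(x)}\,dx+\sum_{i\in I}\mu_i\delta_{x_i}$, completing the proof.

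I expect the main obstacle to be the passage from the weak-$*$ convergence of the measures (which directly controls only the \emph{modulars}) to the convergence of the \emph{Luxemburg norms} in the reverse Hölder inequality, together with the ensuing variable-exponent version of Lions' lemma: one must track carefully how the variable exponents enter the indicator estimates and, in the final limit $\varepsilon\to0$, how the \emph{local} values $q(x_i)$ and $p(x_i)$ emerge. By contrast, the reduction steps (Brezis--Lieb, the compact subcritical embedding, and lower semicontinuity) are routine once the corresponding tools for variable exponent spaces are available.
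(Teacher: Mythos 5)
Your proposal follows the same overall P.L.~Lions strategy as the paper: pass to the reverse H\"older inequality $S\|\phi\|_{L^{q(x)}_{\tilde\nu}}\le \|\phi\|_{L^{p(x)}_{\tilde\mu}}$, deduce atomicity of $\tilde\nu$, locate the atoms in $\A$ via the compact subcritical embedding, derive the pointwise Sobolev estimate with shrinking cutoffs, and finish with lower semicontinuity for $\mu$. The reduction step (subtracting $u$, strong $L^{p(x)}$ convergence from the compact embedding, Brezis--Lieb) is in fact stated more cleanly in your version than in the paper, which first assumes $\nu=0$ and uses a somewhat murky estimate for $\|u_j\nabla\phi\|_{L^{p(x)}}$. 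The one place you take a genuinely different route is the key measure-theoretic lemma proving $\tilde\nu=\sum_i\nu_i\delta_{x_i}$. You go through a Vitali/Lebesgue-differentiation style argument: convert the reverse H\"older inequality into $\tilde\nu(E)\le C\,\tilde\mu(E)^\alpha$ for small sets and partition away the nonatomic part. The paper instead takes the Lebesgue decomposition $\mu=g\,d\nu+\sigma$, tests the reverse H\"older inequality with $\phi=g^{1/(q(x)-p(x))}\chi_{\{g\le n\}}\psi$ to produce a reverse H\"older inequality \emph{with the same measure on both sides}, and then applies two elementary lemmas (a dichotomy $\nu(A)=0$ or $\nu(A)\ge\delta$, and the fact that such a measure is purely atomic). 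Both lead to the same conclusion; your route is the more elementary/classical one, the paper's avoids having to control the exponent $\alpha$ directly at the cost of the absolutely-continuous-density trick.

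One point you should make explicit, because as written it is a real gap in your sketch (and, to be fair, is equally implicit in the paper's Lemma 3): the exponent $\alpha$ in $\tilde\nu(E)\le C\tilde\mu(E)^\alpha$ comes out as roughly $q^-_E/p^+_E$, and under the theorem's hypotheses ($1\le q\le p^*$, $1<p<n$) one has no control on the \emph{global} quantity $q^-/p^+$, which may well be $\le 1$. The estimate only gives $\alpha>1$ when $E$ is confined to a small ball around a point where $q>p$. The fix is to run your ``locate the atoms'' step \emph{first}: the compact embedding argument shows $\tilde\nu$ is supported in $\overline{\A}$, and on $\A$ one has $q=p^*>p$, so by continuity every point of $\overline\A\cap\Omega$ has a neighborhood on which $\inf q>\sup p$; you then apply the partitioning argument locally in such neighborhoods. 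With that reordering (or an explicit localization remark), the argument closes. The remaining steps — passage from modular to Luxemburg-norm convergence, the $\varepsilon\to0$ limit producing $q(x_i)$ and $p(x_i)$ as exponents, and the convexity-plus-H\"older estimate giving $\mu(\{x_i\})\ge\tilde\mu(\{x_i\})$ — are all sound and match the paper in substance, with your treatment of the $\mu$-inequality being somewhat more detailed than the paper's.
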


We want to remark that in Theorem \ref{ccp} is not required the exponent $q(x)$ to be critical {\em everywhere} and that the point masses are located in the {\em criticality set} $\A = \{x\in \Omega\colon q(x)=p^*(x)\}$.

\medskip

Now, as an application of Theorem \ref{ccp}, following the techniques of \cite{GAP} we prove the existence of solutions to
\begin{equation}\label{gap}
\begin{cases}
-\Delta_{p(x)} u = |u|^{q(x)-2}u + \lambda(x) |u|^{r(x)-2}u & \mbox{in }\Omega\\
u=0 & \mbox{on }\partial\Omega.
\end{cases}
\end{equation}

We have, in the spirit of \cite{GAP}, two types of results, depending on $r(x)$ being smaller or bigger that $p(x)$. More precisely, we prove
\begin{teo}\label{r<p}
Let $p(x)$ and $q(x)$ be as in Theorem \ref{ccp} and let $r(x)$ be continuous. Moreover, assume that $\max_{\overline\Omega}p\le \min_{\overline\Omega}q$ and $\max_{\overline\Omega}r\le \min_{\overline\Omega}p$.

Then, there exists a constant $\lambda_1>0$ depending only on $p,q,r,N$ and $\Omega$ such that if $\lambda(x)$ verifies $0<\inf_{x\in\Omega}\lambda(x)\le \|\lambda\|_{L^{\infty}(\Omega)} <\lambda_1$, then there exists infinitely many solutions to \eqref{gap} in $W^{1,p(x)}_0(\Omega)$.
\end{teo}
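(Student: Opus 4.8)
The plan is to obtain the solutions of \eqref{gap} as critical points of the energy functional
\[
\Phi(u)=\int_\Omega\frac{1}{p(x)}|\nabla u|^{p(x)}\,dx-\int_\Omega\frac{1}{q(x)}|u|^{q(x)}\,dx-\int_\Omega\frac{\lambda(x)}{r(x)}|u|^{r(x)}\,dx
\]
on $X:=W^{1,p(x)}_0(\Omega)$. From the continuous embedding $X\hookrightarrow L^{q(x)}(\Omega)$ and the compact embedding $X\hookrightarrow L^{r(x)}(\Omega)$ (compact because $r^+<p^*(x)$ uniformly in $x$, as $r^+\le p^-$) one checks that $\Phi\in C^1(X)$, that $\Phi$ is even with $\Phi(0)=0$, and that its critical points are exactly the weak solutions of \eqref{gap}. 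Since $q(x)$ meets the critical exponent on $\A$, the embedding into $L^{q(x)}(\Omega)$ is not compact, so $\Phi$ does not satisfy the Palais--Smale condition at every level; Theorem \ref{ccp} is the tool that quantifies this failure.

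The technical heart is the following dichotomy. \emph{If $\{u_j\}\subset X$ is bounded with $\Phi(u_j)\to c$ and $\Phi'(u_j)\to0$ in $X^*$, then, along a subsequence, either $u_j\to u$ strongly in $X$, or $c\ge\Phi(u)+\tfrac1N S^N$, where $u$ is the weak limit.} To see this, pass to a subsequence with $u_j\rightharpoonup u$, $|\nabla u_j|^{p(x)}\rightharpoonup\mu$ and $|u_j|^{q(x)}\rightharpoonup\nu$ weakly-$*$, and apply Theorem \ref{ccp}: $\nu=|u|^{q(x)}+\sum_{i\in I}\nu_i\delta_{x_i}$, $\mu\ge|\nabla u|^{p(x)}+\sum_{i\in I}\mu_i\delta_{x_i}$, with $\{x_i\}\subset\A$ and $S\nu_i^{1/p^*(x_i)}\le\mu_i^{1/p(x_i)}$. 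If $I=\emptyset$, then $|u_j|^{q(x)}\to|u|^{q(x)}$ as measures, which together with $u_j\to u$ a.e.\ gives $u_j\to u$ in $L^{q(x)}(\Omega)$; combined with $\Phi'(u_j)\to0$ and the $(S_+)$ property of the $p(x)$-Laplacian this yields $u_j\to u$ in $X$. If $I\neq\emptyset$, testing $\Phi'(u_j)$ against $u_j\varphi_\varepsilon$ with $\varphi_\varepsilon$ concentrating at an $x_i$ (following Lions) yields, after a limit passage controlled by the variable-exponent H\"older inequality, the compact embedding into $L^{p(x)}_{\mathrm{loc}}(\Omega)$ and the absolute continuity of $\int_\Omega|u|^{q(x)}\,dx$, the identity $\mu_i=\nu_i$; since $x_i\in\A$ and $\frac1{p(x_i)}-\frac1{p^*(x_i)}=\frac1N$, this forces $\nu_i\ge S^N$ and
\[
c=\lim_j\Phi(u_j)\ \ge\ \Phi(u)+\sum_{i\in I}\Big(\frac{\mu_i}{p(x_i)}-\frac{\nu_i}{q(x_i)}\Big)\ =\ \Phi(u)+\frac1N\sum_{i\in I}\nu_i\ \ge\ \Phi(u)+\frac1N S^N.
\]

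Because $r^+<p^-$, for $\|u\|$ small the term involving $\lambda$ dominates the gradient part (here $\inf_\Omega\lambda>0$ is used), so $\Phi<0$ on arbitrarily small spheres of any finite-dimensional subspace of $X$; on the other hand $\Phi$ is unbounded below due to the critical term. Following \cite{GAP}, I would then truncate: pick a small $R>0$ and a smooth $\tau$ with $\tau\equiv1$ on $[0,R]$, $\tau\equiv0$ on $[2R,\infty)$, and set $\widetilde\Phi(u):=\int_\Omega\frac{|\nabla u|^{p(x)}}{p(x)}-\tau(\|u\|)\int_\Omega\frac{|u|^{q(x)}}{q(x)}-\int_\Omega\frac{\lambda|u|^{r(x)}}{r(x)}$. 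Choosing first $\|\lambda\|_{L^\infty(\Omega)}<\lambda_1$ and then $R$ small (this fixes $\lambda_1=\lambda_1(p,q,r,N,\Omega)$), the functional $\widetilde\Phi$ is even, $C^1$, coercive and bounded below (again by $r^+<p^-$), it coincides with $\Phi$ on $\{\|u\|\le R\}$, it is strictly positive on $\{\|u\|>R\}$, and $\Phi(u)>-\tfrac1N S^N$ whenever $\|u\|\le R$. Consequently every Palais--Smale sequence of $\widetilde\Phi$ at a level $c<0$ eventually satisfies $\|u_j\|\le R$, hence is a Palais--Smale sequence for $\Phi$, and the dichotomy above rules out the concentration alternative (it would give $c\ge\Phi(u)+\tfrac1N S^N>0$); therefore $\widetilde\Phi$ satisfies $(PS)_c$ for every $c<0$. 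Now let $\gamma$ be the Krasnoselskii genus, $\Gamma_k:=\{A\subset X\setminus\{0\}\colon A\text{ closed, symmetric},\ \gamma(A)\ge k\}$ and $c_k:=\inf_{A\in\Gamma_k}\sup_{u\in A}\widetilde\Phi(u)$. The negativity of $\Phi$ on small symmetric sets of arbitrary genus gives $-\infty<c_k<0$ for all $k\in\N$; the genus version of the Ljusternik--Schnirelmann theorem then shows each $c_k$ is a critical value of $\widetilde\Phi$, hence of $\Phi$ (everything takes place in $\{\widetilde\Phi=\Phi\}$), and that $c_k\to0^-$; and if a value is attained for several consecutive indices, its critical set has genus $\ge2$ and is therefore infinite. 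In all cases \eqref{gap} has infinitely many solutions in $X$.

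The main obstacle is the dichotomy of the second step, specifically transporting Lions' localization argument to the variable exponent setting: the mixed term $\int_\Omega|\nabla u_j|^{p(x)-2}\nabla u_j\cdot u_j\nabla\varphi_\varepsilon\,dx$ must be shown to vanish as $\varphi_\varepsilon$ concentrates, and $\mu_i=\nu_i$ extracted, now via the machinery of variable-exponent Lebesgue and Sobolev spaces together with Theorem \ref{ccp} rather than plain Hölder and scaling. The remaining ingredients — the coercivity and boundedness below of $\widetilde\Phi$, the joint calibration of $R$ and $\lambda_1$ that keeps the negative sublevels of $\widetilde\Phi$ inside $\{\widetilde\Phi=\Phi\}$, and the genus bookkeeping — are routine once the threshold $\tfrac1N S^N$ is available; note in particular that, contrary to what the variable geometry might suggest, this threshold is independent of the concentration point, because $\frac1{p(x)}-\frac1{p^*(x)}\equiv\frac1N$.
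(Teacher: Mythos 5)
Your proposal takes the same overall route as the paper: invoke Theorem \ref{ccp} to control Palais--Smale sequences below a concentration threshold, truncate the functional as in Garc\'{\i}a Azorero--Peral to make it bounded below, and extract infinitely many negative critical values via Krasnoselskii genus. One small improvement worth pointing out: your energy threshold is derived by integrating $1/p(x)$ and $1/q(x)$ directly against the limit measures $\mu$, $\nu$, which, using $\mu_i=\nu_i$, $q(x_i)=p^*(x_i)$, and the pointwise identity $\tfrac{1}{p(x_i)}-\tfrac{1}{p^*(x_i)}=\tfrac1N$, gives the clean bound $c\ge\Phi(u)+\tfrac1N S^N$; the paper instead subtracts $\tfrac{1}{p^+}\langle\mathcal F'(u_j),u_j\rangle$, which yields the coarser constant $\bigl(\tfrac{1}{p^+}-\tfrac{1}{q^-}\bigr)$ and then has to compensate for the $\lambda$-term with an extra H\"older estimate and an explicit minimization of $f_1,f_2$, producing the threshold $\bigl(\tfrac{1}{p^+}-\tfrac{1}{q^-}\bigr)S^n+K\min\{\|\lambda\|^{\alpha_1},\|\lambda\|^{\alpha_2}\}$. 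Your form is sharper and conceptually cleaner, though the price is that you must bound $\Phi(u)$ from below on the truncation ball, which forces the joint calibration of $R$ and $\lambda_1$ that you flag; the order should be to fix $R$ so that the $|u|^{q(x)}$ contribution is small and then shrink $\lambda_1$, rather than the reverse as written. The remaining ingredients (strong $W^{1,p(x)}_0$ convergence once $I=\emptyset$, which the paper obtains via the solution operator $T$ and you via the $(S_+)$ property, and the genus bookkeeping) are equivalent.
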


\begin{teo}\label{r>p}
Let $p(x)$ and $q(x)$ be as in Theorem \ref{ccp} and let $r(x)$ be continuous. Moreover, assume that $\max_{\overline{\Omega}} p \le \min_{\overline{\Omega}} r$ and that there exists $\eta>0$ such that $r(x)\le p^*(x) - \eta$ in $\Omega$.

Then, there exists $\lambda_0>0$ depending only on $p, q, r, N$ and $\Omega$, such that if
$$
\inf_{x\in A_\delta}\lambda(x)>\lambda_0 \quad \mbox{ for some } \delta>0,
$$
problem \eqref{gap} has at least one nontrivial solution in $W^{1,p(x)}_0(\Omega)$. Here, $\A_\delta$ is the $\delta-$tubular neighborhood of $\A$, namely
$$
\A_\delta := \bigcup_{x\in\A} (B_\delta(x) \cap \Omega).
$$
\end{teo}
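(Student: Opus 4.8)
The plan is to realize a nontrivial solution of \eqref{gap} as a mountain pass critical point of the associated energy functional, using Theorem \ref{ccp} to recover the compactness destroyed by the critical term. Throughout I write $h^-:=\min_{\overline\Omega}h$, $h^+:=\max_{\overline\Omega}h$ for $h\in\{p,q,r\}$, so the hypotheses read $p^+\le q^-$, $p^+\le r^-$, $r\le p^*-\eta$. Weak solutions of \eqref{gap} are exactly the critical points of the $C^1$ functional
$$
\Phi(u)=\int_\Omega\frac{|\nabla u|^{p(x)}}{p(x)}\,dx-\int_\Omega\frac{|u|^{q(x)}}{q(x)}\,dx-\int_\Omega\frac{\lambda(x)|u|^{r(x)}}{r(x)}\,dx
$$
on the reflexive space $W^{1,p(x)}_0(\Omega)$ (possible criticality of $q$ affects compactness, not differentiability). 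Since $p^+\le q^-$ and $p^+\le r^-$, both nonlinear terms are of higher order than $\int_\Omega|\nabla u|^{p(x)}\,dx$ near the origin; combining the modular--norm inequalities with the (continuous, resp.\ compact) Sobolev embeddings $W^{1,p(x)}_0(\Omega)\hookrightarrow L^{q(x)}(\Omega)$ and $W^{1,p(x)}_0(\Omega)\hookrightarrow L^{r(x)}(\Omega)$, one checks that $\Phi$ has the mountain pass geometry: $\Phi(0)=0$; $\Phi(u)\ge\alpha>0$ on some sphere $\|u\|_{W^{1,p(x)}_0}=\rho$; and $\Phi(tw)\to-\infty$ as $t\to+\infty$ for fixed $w\neq 0$. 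Let $c$ be the corresponding mountain pass level, so $c\ge\alpha>0$.

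The crucial point is a local Palais--Smale condition: there is a constant $c^*>0$ depending only on $N$ and the best constant $S$ (in fact $c^*=\tfrac1N S^N$) such that $\Phi$ satisfies $(PS)_c$ for every $c<c^*$. Given a $(PS)_c$ sequence $\{u_j\}$, comparing $\langle\Phi'(u_j),u_j\rangle$ with $\Phi(u_j)$ and using $p^+\le q^-,r^-$ shows $\{u_j\}$ is bounded, so after a subsequence $u_j\rightharpoonup u$, $|\nabla u_j|^{p(x)}\rightharpoonup\mu$ and $|u_j|^{q(x)}\to\nu$ weakly-$*$, with $u$ a weak solution of \eqref{gap}. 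Theorem \ref{ccp} gives a countable $\{x_i\}_{i\in I}\subset\A$ and the stated decompositions of $\mu,\nu$. Testing $\Phi'(u_j)$ against $u_j\phi$ for cut-offs $\phi$ concentrated at a fixed $x_i$, letting $j\to\infty$ and then shrinking the support, and using that $r$ is strictly subcritical, so $|u_j|^{r(x)}\to|u|^{r(x)}$ in $L^1_{\mathrm{loc}}(\Omega)$ and the perturbation contributes no atom, one gets $\nu_i\ge\mu_i$; with $S\nu_i^{1/p^*(x_i)}\le\mu_i^{1/p(x_i)}$ and the identity $1-\tfrac{p(x_i)}{p^*(x_i)}=\tfrac{p(x_i)}{N}$ this forces $\nu_i\ge S^N$ and $\mu_i\ge S^N$ for all $i$. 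Localizing the energy with the same cut-offs, using that away from the $x_i$'s one has $u_j\to u$ strongly in $W^{1,p(x)}_{\mathrm{loc}}$ (a standard consequence of the $(S_+)$-property of $-\Delta_{p(x)}$ and the Brezis--Lieb lemma) together with $\Phi(u)\ge0$ (from $\langle\Phi'(u),u\rangle=0$ and $p^+\le q^-,r^-$), one obtains $c=\lim_j\Phi(u_j)\ge\tfrac1N\sum_{i\in I}\mu_i$; here the continuity of $p,q$ lets one replace, near $x_i$, the variable exponents by $p(x_i)$ and $q(x_i)=p^*(x_i)$, for which $\tfrac1{p(x_i)}-\tfrac1{p^*(x_i)}=\tfrac1N$, with an error vanishing with the cut-off radius. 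Thus $I\neq\emptyset$ would give $c\ge\tfrac1N S^N=c^*$; so if $c<c^*$ then $I=\emptyset$, $\nu=|u|^{q(x)}$, the nonlinear terms converge strongly, and the $(S_+)$-property yields $u_j\to u$ in $W^{1,p(x)}_0(\Omega)$.

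It remains to exhibit an admissible path along which $\Phi$ stays below $c^*$. Fix a nonnegative $v\in C_0^\infty(\Omega)\setminus\{0\}$ with $\operatorname{supp}v\subset\A_\delta$ (possible since $\A_\delta$ is open and nonempty). As $t\mapsto tv$ generates, after reparametrization, a path in the admissible class, $c\le\max_{t\ge0}\Phi(tv)$. The maximizer $t_*=t_*(\lambda)$ solves
$$
\int_\Omega t_*^{p(x)-1}|\nabla v|^{p(x)}\,dx=\int_\Omega t_*^{q(x)-1}|v|^{q(x)}\,dx+\int_\Omega\lambda(x)\,t_*^{r(x)-1}|v|^{r(x)}\,dx\ \ge\ \Big(\inf_{\A_\delta}\lambda\Big)\min\{1,t_*^{r^{-}-1},t_*^{r^{+}-1}\}\int_{\operatorname{supp}v}|v|^{r(x)}\,dx,
$$
and since $r^->p^-$ this forces $t_*\to0$ as $\inf_{\A_\delta}\lambda\to+\infty$; hence $\Phi(t_*v)\le\tfrac1{p^-}\int_\Omega|\nabla(t_*v)|^{p(x)}\,dx\to0$. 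Therefore there is $\lambda_0=\lambda_0(p,q,r,N,\Omega)$ (fixed once $v$ is chosen) such that $\inf_{\A_\delta}\lambda>\lambda_0$ forces $c\le\max_{t\ge0}\Phi(tv)<c^*$.

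With $c\in(0,c^*)$ and the local Palais--Smale condition available at level $c$, the Mountain Pass Theorem of Ambrosetti--Rabinowitz produces $u\in W^{1,p(x)}_0(\Omega)$ with $\Phi(u)=c>0=\Phi(0)$ and $\Phi'(u)=0$; then $u$ is a nontrivial weak solution of \eqref{gap}, proving the theorem. The main obstacle is the local Palais--Smale step: in contrast with the constant-exponent case, all the concentration estimates must be carried out pointwise, exploiting the continuity of $p$ and $q$ to localize the sharp identity $\tfrac1p-\tfrac1{p^*}=\tfrac1N$ at each concentration point; one must also ensure the strictly subcritical perturbation $\lambda|u|^{r-2}u$ carries no mass in the limit (this is where $r\le p^*-\eta$ is used) and that, once concentration is ruled out, the Palais--Smale sequence converges strongly. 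The remaining, more computational, ingredient is the energy estimate for the test function $v$, which is precisely where the largeness of $\lambda$ on $\A_\delta$ enters.
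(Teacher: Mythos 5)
Your proposal follows the same overall route as the paper: mountain-pass geometry for $\mathcal{F}$, boundedness of Palais--Smale sequences, the concentration--compactness decomposition from Theorem \ref{ccp}, testing $\mathcal{F}'(u_j)$ against $u_j\phi_{i,\varepsilon}$ to compare the atomic masses of $\mu$ and $\nu$ (obtaining $\nu_i\ge\mu_i$, which is actually a more careful statement than the paper's $\mu_i=\nu_i$ and just as effective), the lower bound $\nu_i\ge S^N$, ruling out $I\ne\emptyset$ below an energy threshold, and finally driving the mountain-pass level below that threshold by taking $\lambda$ large on $\A_\delta$ with a test function supported there. The final strong-convergence step differs cosmetically: the paper inverts $-\Delta_{p(x)}$, you invoke the $(S_+)$-property; both are standard.

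The genuine gap is in the threshold. You claim $c^*=\tfrac1N S^N$, derived from the local identity $\tfrac1{p(x_i)}-\tfrac1{p^*(x_i)}=\tfrac1N$ and a localized energy estimate $c\ge\tfrac1N\sum_{i\in I}\mu_i$. But the step ``localizing the energy $\ldots$ using that away from the $x_i$'s one has $u_j\to u$ strongly in $W^{1,p(x)}_{\mathrm{loc}}$ (a standard consequence of the $(S_+)$-property and Brezis--Lieb)'' is asserted, not proved, and it is not a small point: when you replace the global exponent $p^+$ by the local value $p(x_i)$, the coefficient $\tfrac1{p(x)}-\tfrac1{p(x_i)}$ in the combination $\mathcal{F}(u_j)-\tfrac1{p(x_i)}\langle\mathcal{F}'(u_j),u_j\rangle$ has no definite sign on $\Omega$, so you must genuinely cut off, control the commutator terms, and know that outside shrinking balls the sequence converges strongly. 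None of that is carried out. The paper avoids the entire issue by taking the global combination $\mathcal{F}(u_j)-\tfrac1{p^+}\langle\mathcal{F}'(u_j),u_j\rangle$, for which every term is nonnegative, and obtains the cruder but elementary threshold $\left(\tfrac1{p^+}-\tfrac1{q^-_\A}\right)S^N\le\tfrac1N S^N$. Since the remainder of the mountain-pass argument only uses that the threshold is some fixed positive number (and $\lambda_0$ is then chosen accordingly), you should either prove the local strong convergence you invoke or simply replace $\tfrac1N S^N$ by the paper's global constant; as written, the proof of the key energy lower bound is incomplete.
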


\subsection{Organization of the paper}

After finishing this introduction, in Section 2 we give a very short overview of some properties of variable exponent Sobolev spaces that will be used throughout the paper. In Section 3 we deal with the main result of the paper. Namely the proof of the concentration -- compactness principle (Theorem \ref{ccp}). In Section 4, we begin analyzing problem \eqref{gap} and prove Theorem \ref{r>p}. Finally, in Section 5, we prove Theorem \ref{r<p}.

\subsection*{Comment on a related result}

After this paper was written, we found out that a similar result was obtained independently by Yongqiang Fu \cite{Fu}.

Even the techniques in the work of Fu are similar to the ones in this paper (and both are related to the original work by P.L. Lions), we want to remark that our results are slightly more general than those in \cite{Fu}. For instance, we do not require $q(x)$ to be critical everywhere (as is required in \cite{Fu}) and we obtain that the delta functions are located in the criticality set $\A$ (see Theorem \ref{ccp}). 

Also, in our application, again as we do not required the source term to be critical everywhere, so the result in \cite{Fu} is not applicable directly. Moreover, in Theorem \ref{r>p} our approach allows us to consider $\lambda(x)$ not necesarily a constant and the restriction that $\lambda$ is large is only needed in an $L^\infty$-norm in the criticality set.

We believe that these improvements are significant and made our result more flexible that those in \cite{Fu}.


\section{Results on variable exponent Sobolev spaces}

The variable exponent Lebesgue space $L^{p(x)}(\Omega)$ is defined by
$$
L^{p(x)}(\Omega) = \left\{u\in L^1_{\text{loc}}(\Omega) \colon \int_\Omega|u(x)|^{p(x)}\,dx<\infty\right\}.
$$
This space is endowed with the norm
$$
\|u\|_{L^{p(x)}(\Omega)}=\inf\left\{\lambda>0:\int_\Omega\left|\frac{u(x)}{\lambda}\right|^{p(x)}\,dx\leq 1\right\}
$$
The variable exponent Sobolev space $W^{1,p(x)}(\Omega)$ is defined by
$$
W^{1,p(x)}(\Omega) = \{u\in W^{1,1}_{\text{loc}}(\Omega) \colon u\in L^{p(x)}(\Omega) \mbox{ and } |\nabla u |\in  L^{p(x)}(\Omega)\}.
$$
The corresponding norm for this space is
$$
\|u\|_{W^{1,p(x)}(\Omega)}=\|u\|_{L^{p(x)}(\Omega)}+\| |\nabla u| \|_{L^{p(x)}(\Omega)}
$$
Define $W^{1,p(x)}_0(\Omega)$ as the closure of $C_0^\infty(\Omega)$ with respect to the $W^{1,p(x)}(\Omega)$ norm. The spaces $L^{p(x)}(\Omega)$, $W^{1,p(x)}(\Omega)$ and $W^{1,p(x)}_0(\Omega)$ are separable and reflexive Banach spaces when $1<\inf_\Omega p \le \sup_\Omega p <\infty$.

As usual, we denote $p'(x) = p(x)/(p(x)-1)$ the conjugate exponent of $p(x)$.

Define
$$
p^*(x)=\begin{cases}
\frac{Np(x)}{N-p(x)} & \mbox{ if } p(x)<N\\
\infty & \mbox{ if } p(x)\geq N
\end{cases}
$$

The following results are proved in \cite{Fan}
\begin{prop}[H\"older-type inequality]\label{Holder}
Let $f\in L^{p(x)}(\Omega)$ and $g\in L^{p'(x)}(\Omega)$. Then the following inequality holds
$$
\int_\Omega|f(x)g(x)|\,dx\leq C_p\|f\|_{L^{p(x)}(\Omega)}\|g\|_{L^{p'(x)}(\Omega)}
$$
\end{prop}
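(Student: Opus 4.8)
The plan is to reduce the statement to the classical pointwise Young inequality $ab\le a^{s}/s+b^{s'}/s'$ (valid for $a,b\ge0$ and $1/s+1/s'=1$), applied with the \emph{variable} exponents $s=p(x)$ and $s'=p'(x)$, after normalizing $f$ and $g$ by their Luxemburg norms. Throughout I use the standing hypothesis $1<\inf_\Omega p\le\sup_\Omega p<\infty$, which in particular guarantees $p'(x)\in(1,\infty)$ as well.

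First I would dispose of the trivial cases: if $\|f\|_{L^{p(x)}(\Omega)}=0$ then $f=0$ a.e.\ and there is nothing to prove, and similarly for $g$; so assume both norms are positive. Then, fixing $\varepsilon>0$, I set $\lambda=\|f\|_{L^{p(x)}(\Omega)}+\varepsilon$ and $\mu=\|g\|_{L^{p'(x)}(\Omega)}+\varepsilon$, and note that the definition of the norm as an infimum forces
\[
\int_\Omega\Big|\frac{f(x)}{\lambda}\Big|^{p(x)}\,dx\le 1
\qquad\text{and}\qquad
\int_\Omega\Big|\frac{g(x)}{\mu}\Big|^{p'(x)}\,dx\le 1.
\]
Working with $\|f\|_{L^{p(x)}(\Omega)}+\varepsilon$ rather than $\|f\|_{L^{p(x)}(\Omega)}$ directly avoids having to know a priori that the modular $t\mapsto\int_\Omega|f/t|^{p(x)}\,dx$ is left-continuous at the norm value; the sharp inequality is recovered by sending $\varepsilon\to0$ at the end.

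Next I would apply Young's inequality pointwise: for a.e.\ $x\in\Omega$,
\[
\frac{|f(x)|}{\lambda}\cdot\frac{|g(x)|}{\mu}\le\frac{1}{p(x)}\Big|\frac{f(x)}{\lambda}\Big|^{p(x)}+\frac{1}{p'(x)}\Big|\frac{g(x)}{\mu}\Big|^{p'(x)}\le\Big|\frac{f(x)}{\lambda}\Big|^{p(x)}+\Big|\frac{g(x)}{\mu}\Big|^{p'(x)},
\]
using that $1/p(x)\le1$ and $1/p'(x)\le1$. Integrating over $\Omega$ and invoking the two modular bounds gives $\frac{1}{\lambda\mu}\int_\Omega|f(x)g(x)|\,dx\le 2$, hence $\int_\Omega|fg|\,dx\le 2(\|f\|_{L^{p(x)}(\Omega)}+\varepsilon)(\|g\|_{L^{p'(x)}(\Omega)}+\varepsilon)$; letting $\varepsilon\to0$ yields the claim with $C_p=2$. (Keeping the coefficients $1/p(x)$ and $1/p'(x)$ and bounding them by $1/\inf_\Omega p$ and $1/\inf_\Omega p'$ would give the slightly sharper constant $C_p=1/\inf_\Omega p+1/\inf_\Omega p'$, but the precise value of $C_p$ plays no role in what follows.)

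The only point that requires any care is the passage from the infimum defining the Luxemburg norm to the modular inequality $\int_\Omega|f/\lambda|^{p(x)}\,dx\le1$, which the $\varepsilon$-perturbation handles cleanly; everything else is the standard Hölder argument carried out with an $x$-dependent exponent, so I do not anticipate a genuine obstacle.
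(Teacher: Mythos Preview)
Your argument is correct and is exactly the standard proof of the variable-exponent H\"older inequality: normalize by the Luxemburg norms, apply Young's inequality pointwise with exponent $p(x)$, integrate, and use that the modulars of the normalized functions are at most $1$. The paper itself does not supply a proof of this proposition; it simply cites the result from Fan and Zhao~\cite{Fan}, where precisely this argument appears (yielding the constant $C_p=1/p^-+1/(p')^-$ you mention). So there is nothing to compare: your proof \emph{is} the canonical one.
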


\begin{prop}[Sobolev embedding]\label{embedding}
Let $p, q\in C(\overline{\Omega})$ be such that $1\leq q(x)\le p^*(x)$ for all $x\in\overline{\Omega}$. Then there is a continuous embedding 
$$
W^{1,p(x)}(\Omega)\hookrightarrow L^{q(x)}(\Omega).
$$
Moreover, if $\inf_{\Omega} (p^*-q)>0$ then, the embedding is compact.
\end{prop}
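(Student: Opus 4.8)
\emph{Proof proposal.} The plan is to argue by localization. Since $\overline\Omega$ is compact and $p$ is uniformly continuous, for any $\varepsilon>0$ I would cover $\overline\Omega$ by finitely many balls $B_1,\dots,B_m$ with $\mathrm{osc}_{B_k\cap\Omega}\,p<\varepsilon$, fix a smooth partition of unity $\{\psi_k\}$ subordinate to $\{B_k\}$, and reduce every assertion to the corresponding one for $\psi_k u$ on $B_k\cap\Omega$, using that a finite partition of unity makes $\|u\|_{L^{r(x)}(\Omega)}$ comparable to $\sum_k\|\psi_k u\|_{L^{r(x)}(\Omega)}$ for any exponent $r$. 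Writing $p_k^-=\inf_{B_k\cap\Omega}p$ and using that $\Omega$ is bounded (so $L^{p(x)}\hookrightarrow L^{p_k^-}$ locally, and a small ball has the cone/extension property), one gets $\psi_k u\in W^{1,p_k^-}(B_k\cap\Omega)$, and the classical Sobolev embedding yields $\psi_k u\in L^{(p_k^-)^*}(B_k\cap\Omega)$ with norm controlled by $\|u\|_{W^{1,p(x)}(\Omega)}$. Summing over $k$, this already gives the continuous embedding $W^{1,p(x)}(\Omega)\hookrightarrow L^{q(x)}(\Omega)$ throughout the \emph{subcritical} range $\inf_\Omega(p^*-q)>0$: choosing $\varepsilon$ small (depending on this gap and a modulus of continuity of $p^*$) forces $q(x)\le(p_k^-)^*$ pointwise on each $B_k$, whence $L^{(p_k^-)^*}\hookrightarrow L^{q(x)}$ locally. (When $p$ may reach $N$ the same localization works with the classical embeddings $W^{1,t}\hookrightarrow L^s$, $s<\infty$, for $t\ge N$.)

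The delicate point, which I expect to be the main obstacle, is the critical endpoint $q(x)=p^*(x)$: localization cannot reach it, since classical Sobolev on $B_k$ only lands in $L^{(p_k^-)^*}$ while $(p_k^-)^*\le p^*(x)$ there, the wrong direction. To handle it I would first establish the embedding when $p$ is a step function $p=\sum_\ell p_\ell\chi_{\Omega_\ell}$ relative to a decomposition of $\Omega$ into finitely many Lipschitz pieces — on each $\Omega_\ell$ this is exactly the classical critical Sobolev inequality and $p^*\equiv p_\ell^*$ — keeping the constants under control by exploiting that the Gagliardo--Nirenberg--Sobolev inequality on $\R^N$ has a scale-invariant dimensional constant; and then pass to a general continuous $p$ by approximating it uniformly from below by such step functions, using $W^{1,p(x)}(\Omega)\hookrightarrow W^{1,\tilde p(x)}(\Omega)$ for $\tilde p\le p$ together with a limiting argument to recover the sharp exponent $p^*$. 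Alternatively, one follows \cite{Fan} directly at this step.

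Granting the continuous embedding $W^{1,p(x)}(\Omega)\hookrightarrow L^{p^*(x)}(\Omega)$, compactness under $\eta:=\inf_\Omega(p^*-q)>0$ I would get in two steps. First, the localization above together with the classical Rellich--Kondrachov theorem on each $B_k$ shows $W^{1,p(x)}(\Omega)\hookrightarrow L^1(\Omega)$ compactly, so any bounded sequence $\{u_j\}$ in $W^{1,p(x)}(\Omega)$ has a subsequence converging a.e.\ to some $u$, with $u\in L^{p^*(x)}(\Omega)$ and $\sup_j\|u_j\|_{L^{p^*(x)}(\Omega)}<\infty$. Second, I would upgrade a.e.\ convergence to convergence in $L^{q(x)}(\Omega)$ by equi-integrability: for measurable $E\subset\Omega$, the generalized H\"older inequality of Proposition \ref{Holder} with the pair $p^*(x)/q(x)$ and its conjugate gives
\[
\int_E|u_j-u|^{q(x)}\,dx\le C\,\big\||u_j-u|^{q(x)}\big\|_{L^{p^*(x)/q(x)}(E)}\,\|1\|_{L^{(p^*(x)/q(x))'}(E)},
\]
where the first factor stays bounded (being controlled by $\sup_j\|u_j\|_{L^{p^*(x)}}^{q^+}+\|u\|_{L^{p^*(x)}}^{q^+}$, $q^+:=\sup q$) while the second tends to $0$ as $|E|\to0$ \emph{uniformly in $j$}, precisely because $p^*(x)/q(x)\ge 1+\eta/q^+>1$ on all of $\Omega$. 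Combining this with Egorov's theorem and the pointwise convergence yields $\int_\Omega|u_j-u|^{q(x)}\,dx\to0$, which, $q$ being bounded, is equivalent to $\|u_j-u\|_{L^{q(x)}(\Omega)}\to0$; a subsequence argument then shows the whole embedding is compact.
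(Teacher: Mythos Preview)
The paper does not prove this proposition: it is listed among the results ``proved in \cite{Fan}'' and simply quoted, so there is no argument in the paper to compare yours against. Your sketch already goes well beyond what the paper provides.

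On the substance: your localization argument for the subcritical continuous embedding is correct and standard, and your compactness proof (compact embedding into $L^1$ via Rellich--Kondrachov on each patch, a.e.\ convergence, then equi-integrability of $|u_j-u|^{q(x)}$ from H\"older with exponent $p^*(x)/q(x)\ge 1+\eta/q^+>1$, and Egorov to conclude) is clean and correct. The weak point is exactly the one you flag yourself: the critical case $q=p^*$. Your step-function limit does not close as written. From $W^{1,p(x)}\hookrightarrow W^{1,\tilde p(x)}\hookrightarrow L^{\tilde p^*(x)}$ with step exponents $\tilde p\nearrow p$ you obtain bounds only in the \emph{larger} spaces $L^{\tilde p^*(x)}$ (since $\tilde p^*\le p^*$), and even if a Fatou-type argument were pushed through, controlling the embedding constants uniformly over ever finer Lipschitz partitions is nontrivial: the constant in $W^{1,p_\ell}(\Omega_\ell)\hookrightarrow L^{p_\ell^*}(\Omega_\ell)$ depends on an extension operator for $\Omega_\ell$, not merely on the scale-invariant $\R^N$ constant you invoke. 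In fact the critical embedding for variable exponents is known to be delicate under mere continuity of $p$; the subsequent literature typically imposes log-H\"older continuity to reach $L^{p^*(x)}$. So your fallback ``one follows \cite{Fan} directly at this step'' is the honest route here --- and it is exactly what the paper does.
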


\begin{prop}[Poincar\'e inequality]\label{Poincare}
There is a constant $C>0$, such that
$$
\|u\|_{L^{p(x)}(\Omega)}\leq C\| |\nabla u| \|_{L^{p(x)}(\Omega)},
$$
for all $u\in W^{1,p(x)}_0(\Omega)$.
\end{prop}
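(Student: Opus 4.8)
The plan is to reduce the variable-exponent Poincar\'e inequality to the compactness of the Sobolev embedding established in Proposition \ref{embedding}, via a standard contradiction (concentration-compactness-free) argument. Since $\Omega$ is bounded and $p\in C(\overline\Omega)$ with $1<\inf_\Omega p\le\sup_\Omega p<\infty$, the constant exponent $q\equiv 1$ trivially satisfies $1\le q(x)\le p^*(x)$ and moreover $\inf_\Omega(p^*-q)>0$ (because $p^*(x)\ge p(x)>1$ everywhere, or $p^*(x)=\infty$), so by Proposition \ref{embedding} the embedding $W^{1,p(x)}(\Omega)\hookrightarrow L^1(\Omega)$ is compact; one could just as well work with $L^{p(x)}(\Omega)$ itself, since $\inf_\Omega(p^*-p)>0$ as well.

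First I would set up the contradiction: suppose no such constant $C$ exists. Then there is a sequence $\{u_k\}\subset W^{1,p(x)}_0(\Omega)$ with $\|u_k\|_{L^{p(x)}(\Omega)}=1$ and $\||\nabla u_k|\|_{L^{p(x)}(\Omega)}\to 0$. In particular $\{u_k\}$ is bounded in $W^{1,p(x)}(\Omega)$, so by reflexivity (stated in Section 2) a subsequence converges weakly in $W^{1,p(x)}_0(\Omega)$ to some $u$, and by the compactness half of Proposition \ref{embedding} the same subsequence converges strongly to $u$ in $L^{p(x)}(\Omega)$; hence $\|u\|_{L^{p(x)}(\Omega)}=1$. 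Next I would identify $u$: since $\||\nabla u_k|\|_{L^{p(x)}(\Omega)}\to0$, we have $\nabla u_k\to 0$ strongly in $L^{p(x)}(\Omega)$, and because weak limits of derivatives are the derivatives of weak limits (distributional convergence plus boundedness), $\nabla u=0$ a.e.\ in $\Omega$. A function in $W^{1,p(x)}_0(\Omega)$ with vanishing gradient on a (bounded, connected-component-wise) domain is constant on each component, and being the $W^{1,p(x)}$-limit of $C_0^\infty(\Omega)$ functions forces that constant to be $0$, so $u\equiv0$. This contradicts $\|u\|_{L^{p(x)}(\Omega)}=1$, and the inequality follows.

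The main obstacle is making rigorous the step "$u\in W^{1,p(x)}_0(\Omega)$ and $\nabla u=0$ imply $u=0$" in the variable-exponent setting, together with the convergence bookkeeping between the modular and the Luxemburg norm. For the first point one notes that $L^{p(x)}(\Omega)\hookrightarrow L^1_{\mathrm{loc}}(\Omega)$ (Proposition \ref{Holder} with the bounded domain), so $u\in W^{1,1}_{\mathrm{loc}}(\Omega)$ with $\nabla u=0$, hence $u$ is locally constant; and the density definition of $W^{1,p(x)}_0(\Omega)$ together with $\||\nabla u_k|\|_{L^{p(x)}}\to0$ pins the constant to zero. Alternatively, and perhaps more cleanly, one can avoid identifying $u$ altogether: the Sobolev--Poincar\'e-type inequality $\|v\|_{L^{p(x)}(\Omega)}\le C'\|v\|_{W^{1,p(x)}(\Omega)}$ is automatic, and one only needs the compact embedding plus the fact that $\|\nabla u_k\|_{L^{p(x)}}\to0$ to derive that $\{u_k\}$ is Cauchy in $L^{p(x)}$ with limit having zero gradient; if one prefers a direct proof, one can first establish the inequality on $C_0^\infty(\Omega)$ via the one-dimensional fundamental-theorem-of-calculus estimate $|u(x)|\le\int_{-\infty}^{x_1}|\partial_1 u|\,dt$ combined with the generalized H\"older inequality (Proposition \ref{Holder}) and the norm-modular relations, then pass to the closure. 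I would present the contradiction argument as the primary route, as it is shortest and uses only results already recorded in Section 2.
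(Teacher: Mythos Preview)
Your argument is sound, but note that the paper does not actually prove this proposition: together with Propositions \ref{Holder} and \ref{embedding} it is simply quoted as a result established in \cite{Fan}, so there is no ``paper's own proof'' to compare against.

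That said, your contradiction-via-compact-embedding route is a legitimate self-contained proof using only the tools already recorded in Section~2. Two small points are worth tightening. First, your appeal to Proposition~\ref{embedding} with $q=p$ requires $\inf_\Omega(p^*-p)>0$; under the paper's standing hypothesis $\sup_\Omega p<N$ this is immediate since $p^*-p=p^2/(N-p)\ge (p^-)^2/(N-p^-)>0$. Second, the step ``$u\in W^{1,p(x)}_0(\Omega)$ and $\nabla u=0$ imply $u=0$'' is cleanest if you upgrade your $W^{1,1}_{\mathrm{loc}}$ observation to the global embedding $W^{1,p(x)}_0(\Omega)\hookrightarrow W^{1,1}_0(\Omega)$ on the bounded domain (via Proposition~\ref{Holder}), after which the conclusion is the classical constant-exponent fact; this avoids the slightly informal ``density pins the constant to zero'' phrasing.
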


\begin{remark}
By Proposition \ref{Poincare}, we know that $\| |\nabla u| \|_{L^{p(x)}(\Omega)}$ and $\|u\|_{W^{1,p(x)}(\Omega)}$ are equivalent norms on $W_0^{1,p(x)}(\Omega)$.
\end{remark}

Throughout this paper the following notation will be used: Given $q\colon \Omega\to\R$ bounded, we denote
$$
q^+ := \sup_\Omega q(x), \qquad q^- := \inf_\Omega q(x).
$$

The following proposition is also proved in \cite{Fan} and it will be most usefull.
\begin{prop}\label{norma.y.rho}
Set $\rho(u):=\int_\Omega|u(x)|^{p(x)}\,dx$. For $u,\in L^{p(x)}(\Omega)$ and $\{u_k\}_{k\in\N}\subset L^{p(x)}(\Omega)$, we have
\begin{align}
& u\neq 0 \Rightarrow \Big(\|u\|_{L^{p(x)}(\Omega)} = \lambda \Leftrightarrow \rho(\frac{u}{\lambda})=1\Big).\\
& \|u\|_{L^{p(x)}(\Omega)}<1 (=1; >1) \Leftrightarrow \rho(u)<1(=1;>1).\\
& \|u\|_{L^{p(x)}(\Omega)}>1 \Rightarrow \|u\|^{p^-}_{L^{p(x)}(\Omega)} \leq \rho(u) \leq \|u\|^{p^+}_{L^{p(x)}(\Omega)}.\\
& \|u\|_{L^{p(x)}(\Omega)}<1 \Rightarrow \|u\|^{p^+}_{L^{p(x)}(\Omega)} \leq \rho(u) \leq \|u\|^{p^-}_{L^{p(x)}(\Omega)}.\\
& \lim_{k\to\infty}\|u_k\|_{L^{p(x)}(\Omega)} = 0 \Leftrightarrow \lim_{k\to\infty}\rho(u_k)=0.\\
& \lim_{k\to\infty}\|u_k\|_{L^{p(x)}(\Omega)} = \infty \Leftrightarrow \lim_{k\to\infty}\rho(u_k) = \infty.
\end{align}
\end{prop}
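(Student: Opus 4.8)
\textbf{Proof plan for Proposition \ref{norma.y.rho}.}
The plan is to derive all six assertions from the single structural fact that the map
$t\mapsto \rho(tu)=\int_\Omega |tu(x)|^{p(x)}\,dx = \int_\Omega t^{p(x)}|u(x)|^{p(x)}\,dx$
is, for fixed $u\neq 0$, continuous and strictly increasing on $(0,\infty)$, with $\rho(tu)\to 0$ as $t\to 0^+$ and $\rho(tu)\to\infty$ as $t\to\infty$. Continuity follows from dominated convergence (on the set where $|u|>1$ one dominates $t^{p(x)}|u|^{p(x)}$ near a given $t_0$ by $(t_0+1)^{p^+}|u|^{p(x)}$, which is integrable, and similarly on $\{|u|\le 1\}$); strict monotonicity is immediate since $t^{p(x)}$ is strictly increasing in $t$ for each $x$ (using $p^->1>0$) and $|u|^{p(x)}>0$ on a set of positive measure; the two limits come from monotone convergence. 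Consequently, for $u\neq 0$ there is a unique $\lambda>0$ with $\rho(u/\lambda)=1$, and by definition of the Luxemburg norm this $\lambda$ equals $\|u\|_{L^{p(x)}(\Omega)}$ (the infimum in the definition is attained and the constraint set is exactly $[\lambda,\infty)$). This proves the first assertion, and the monotonicity of $t\mapsto\rho(u/t)$ also gives immediately the equivalence $\|u\|<1\iff\rho(u)<1$, $=1\iff\rho=1$, $>1\iff\rho>1$, which is the second assertion (the case $u=0$ being trivial).

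Next I would prove the two two-sided estimates. Suppose $\|u\|_{L^{p(x)}(\Omega)}=\lambda>1$. Split $\Omega=\Omega_1\cup\Omega_2$ with $\Omega_1=\{x:|u(x)|\ge1\}$ and $\Omega_2=\{x:|u(x)|<1\}$ is not quite the right split; rather, the standard trick is: since $\lambda>1$ we have $\lambda^{-p^+}\le \lambda^{-p(x)}\le \lambda^{-p^-}$ for all $x$, hence
$$
\lambda^{-p^+}\rho(u) \le \int_\Omega \lambda^{-p(x)}|u(x)|^{p(x)}\,dx = \rho(u/\lambda) = 1 \le \lambda^{-p^-}\rho(u),
$$
and rearranging gives $\|u\|^{p^-}\le\rho(u)\le\|u\|^{p^+}$, which is the third assertion. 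Entirely analogously, if $\|u\|=\lambda<1$ then $\lambda^{-p^-}\le\lambda^{-p(x)}\le\lambda^{-p^+}$, so
$$
\lambda^{-p^-}\rho(u)\le \rho(u/\lambda)=1\le \lambda^{-p^+}\rho(u),
$$
which rearranges to $\|u\|^{p^+}\le\rho(u)\le\|u\|^{p^-}$, the fourth assertion. (In both, $\rho(u/\lambda)=1$ uses the first assertion, and one checks $u\neq0$ separately since otherwise the norm is $0$, not $>1$ or in $(0,1)$ unless trivial.)

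Finally, the limit assertions five and six follow from the estimates just proved. If $\|u_k\|\to 0$ then eventually $\|u_k\|<1$, so by the fourth assertion $\rho(u_k)\le \|u_k\|^{p^-}\to 0$; conversely if $\rho(u_k)\to0$ then eventually $\rho(u_k)<1$, so by the second assertion $\|u_k\|<1$, and then by the fourth assertion $\|u_k\|^{p^+}\le\rho(u_k)\to 0$, whence $\|u_k\|\to0$. Symmetrically, if $\|u_k\|\to\infty$ then eventually $\|u_k\|>1$ and $\rho(u_k)\ge\|u_k\|^{p^-}\to\infty$; and if $\rho(u_k)\to\infty$ then eventually $\rho(u_k)>1$, so $\|u_k\|>1$, and $\|u_k\|^{p^+}\ge\rho(u_k)\to\infty$. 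I expect no serious obstacle here; the only point requiring care is the rigorous justification that the infimum in the Luxemburg norm is attained and equals the unique root of $\rho(u/\lambda)=1$ — this rests on the continuity and strict monotonicity of $t\mapsto\rho(tu)$ established at the outset, together with the observation that $\rho(u/\lambda)\le 1$ if and only if $\lambda\ge\lambda_0$ where $\lambda_0$ is that root, so the infimum of the admissible set $[\lambda_0,\infty)$ is exactly $\lambda_0$.
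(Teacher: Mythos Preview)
Your argument is correct and is exactly the standard proof of these modular--norm relations (this is essentially the argument in Fan--Zhao \cite{Fan}). Note, however, that the paper does not give its own proof of this proposition: it simply records the statement and cites \cite{Fan}, so there is nothing to compare your approach against beyond observing that you have reproduced the classical derivation.
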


\section{concentration compactness principle}
Let $\{u_j\}_{j\in\N}$ be a bounded sequence in $W_0^{1,p(x)}(\Omega)$ and let $q\in C(\overline{\Omega})$ be such that $q\le p^*$ with $\{x\in\Omega\colon q(x)=p^*(x)\} \not= \emptyset$. Then there exists a subsequence that we still denote by $\{u_j\}_{j\in\N}$, such that
\begin{itemize}
\item $u_j\rightharpoonup u\quad \mbox{ weakly in } W_0^{1,p(x)}(\Omega)$,
\item $u_j\to u \quad \mbox{ strongly in } L^{r(x)}(\Omega)\quad\forall1\leq r(x)<p^*(x)$,
\item $|u_j|^{q(x)}\rightharpoonup \nu$ weakly * in the sense of measures,
\item $|\nabla u_j|^{p(x)}\rightharpoonup \mu$ weakly * in the sense of measures.
\end{itemize}

Consider $\phi\in C^\infty(\overline{\Omega})$, from the Poincar\'e inequality for variable exponents, we obtain
\begin{equation}\label{poincare}
\|\phi u_j\|_{L^{q(x)}(\Omega)} S\leq\|\nabla(\phi u_j)\|_{L^{p(x)}(\Omega)}.
\end{equation}
On the other hand,
$$
| \|\nabla(\phi u_j)\|_{L^{p(x)}(\Omega)}-\|\phi\nabla u_j\|_{L^{p(x)}(\Omega)}|\le \| u_j \nabla \phi\|_{L^{p(x)}(\Omega)}.
$$
We first assume that $\nu=0$. Then, we observe that the right side of the inequality converges to 0. In fact, if, for instance $\| |u|^{p(x)}\|_{L^1(\Omega)}\ge 1$,
\begin{align*}
\| u_j \nabla \phi\|_{L^{p(x)}(\Omega)}&\le (\|\nabla \phi\|_{L^{\infty}(\Omega)}+1)^{p^+} \| u_j \|_{L^{p(x)}(\Omega)}\\
& \le (\|\nabla \phi\|_{L^{\infty}(\Omega)}+1)^{p^+} \| |u|^{p(x)}\|_{L^1(\Omega)}^{1/p_-}\to 0
\end{align*}

Finally, if we take the limit for $j\to \infty$ in \eqref{poincare}, we have,
\begin{equation}\label{RH}
\|\phi\|_{L_\nu^{q(x)}(\Omega)} S\leq\|\phi\|_{L_\mu^{p(x)}(\Omega)}
\end{equation}

Now we need a lemma that is the key role in the proof of Theorem \ref{ccp}.

\begin{lema}\label{Lema 1}
Let $\mu,\nu$ be two non-negative and bounded measures on $\overline{\Omega}$, such that for $1\leq p(x)<r(x)<\infty$ there exists some  constant $C>0$ such that
$$
\|\phi\|_{L_\nu^{r(x)}(\Omega)}\leq C\|\phi\|_{L_\mu^{p(x)}(\Omega)}
$$
Then, there exist $\{x_j\}_{j\in J}\subset\overline{\Omega}$ and $\{\nu_j\}_{j\in I}\subset (0,\infty)$,such that:
$$
\nu=\Sigma\nu_i\delta_{x_i}
$$
\end{lema}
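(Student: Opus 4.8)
The plan is to adapt Lions' original reverse-Hölder-type argument to the variable exponent setting. The hypothesis says that the identity map on continuous functions extends to a bounded embedding $L^{r(x)}_\mu(\overline\Omega)\hookrightarrow$ — wait, it is the other way: $\|\phi\|_{L^{r(x)}_\nu}\le C\|\phi\|_{L^{p(x)}_\mu}$, which should be read as: the inclusion of the space of $\mu$-test functions into the space of $\nu$-test functions is continuous when domains carry these Luxemburg norms. First I would record the elementary consequence that for every Borel set $E\subset\overline\Omega$, testing with (approximations of) $\phi=\chi_E$ gives a relation between $\nu(E)$ and $\mu(E)$; the key point is that because $r>p$ pointwise, on small sets the $L^{r(x)}_\nu$ norm dominates a power of $\nu(E)$ that beats the corresponding power of $\mu(E)$. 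Concretely, using Proposition \ref{norma.y.rho} (the norm–modular relations), if $\nu(E)\le 1$ then $\|\chi_E\|_{L^{r(x)}_\nu}\ge \nu(E)^{1/r^-_E}$ and $\|\chi_E\|_{L^{p(x)}_\mu}\le \mu(E)^{1/p^+_E}$ when $\mu(E)\le1$, where $r^-_E,p^+_E$ are the local inf/sup on $E$. Hence $\nu(E)^{1/r^-_E}\le C\mu(E)^{1/p^+_E}$ on small sets.

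The second step is the standard dichotomy for the measure $\nu$ with respect to $\mu$. I would first argue $\nu\ll\mu$: if $\mu(E)=0$ then the inequality above (applied to subsets, shrinking so the modular bounds apply) forces $\nu(E)=0$. Therefore $\nu=f\,d\mu$ for some $f\in L^1_\mu$, $f\ge0$. Then I apply the inequality on small balls $B_\varepsilon(x)$ and let $\varepsilon\to0$: by the Lebesgue–Besicovitch differentiation theorem, for $\mu$-a.e. $x$ the ratio $\nu(B_\varepsilon(x))/\mu(B_\varepsilon(x))\to f(x)$, and the continuity of $r$ and $p$ means the local exponents $r^-_{B_\varepsilon(x)},p^+_{B_\varepsilon(x)}\to r(x),p(x)$. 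Plugging into $\nu(B_\varepsilon(x))^{1/r^-}\le C\mu(B_\varepsilon(x))^{1/p^+}$ and rearranging gives, at a Lebesgue point $x$, an inequality of the form $f(x)\le C^{r(x)} \mu(B_\varepsilon(x))^{r(x)/p(x)-1}\cdot(1+o(1))$; since $r(x)/p(x)>1$, the exponent $r(x)/p(x)-1$ is positive, so letting $\varepsilon\to0$ forces $f(x)=0$ at every $x$ where $\mu$ has no atom. Consequently $f$ is supported on the at-most-countable atomic part of $\mu$, which gives $\nu=\sum_{i\in I}\nu_i\delta_{x_i}$ with $\nu_i=f(x_i)\mu(\{x_i\})>0$ (discard indices with $\nu_i=0$), and $I$ countable because a finite bounded measure has countably many atoms.

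The main obstacle — and the one point needing care — is the bookkeeping with the Luxemburg norms: unlike the constant-exponent case, $\|\chi_E\|_{L^{r(x)}_\nu}$ is not simply $\nu(E)^{1/r}$, so one must control it by $\nu(E)$ to a power that converges to $1/r(x)$ as $E$ shrinks to $x$, and similarly from above for the $\mu$-side, all while keeping track of whether the relevant modulars are $\le 1$ or $\ge1$ (the two regimes in Proposition \ref{norma.y.rho} flip the direction of the power inequalities). Since $\nu,\mu$ are finite, for $\varepsilon$ small enough $\nu(B_\varepsilon(x))<1$ and $\mu(B_\varepsilon(x))<1$, so we are in the regime where $\|\chi_E\|_{L^{r(x)}_\nu}^{r^+_E}\le \nu(E)\le \|\chi_E\|_{L^{r(x)}_\nu}^{r^-_E}$ and likewise for $\mu$; this is exactly what is needed, and the continuity of the exponents makes $r^\pm_{B_\varepsilon(x)}\to r(x)$, closing the argument. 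A minor technical point is that $\chi_E$ is not smooth, but $L^{s(x)}_\lambda$ norms of characteristic functions of, say, open sets are approximated from inside by continuous functions, or one simply notes the inequality in the hypothesis extends to bounded Borel functions by density/monotone convergence; I would dispatch this in one line. Finally I would remark that the discrepancy between the index sets $J$ and $I$ in the statement is immaterial: only the countably many $x_i$ carrying positive mass $\nu_i$ survive.
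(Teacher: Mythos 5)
Your argument is correct but takes a genuinely different route from the paper's. The paper performs the Lebesgue decomposition $\mu = g\,\nu+\sigma$ with $\sigma\perp\nu$, then substitutes $\phi = g^{1/(r(x)-p(x))}\chi_{\{g\le n\}}\psi$ into the reverse H\"older inequality to produce a \emph{self-referential} inequality $\|\psi\|_{L^{r(x)}_{\nu_n}}\le\|\psi\|_{L^{p(x)}_{\nu_n}}$ for the truncated measure $d\nu_n=g^{r(x)/(r(x)-p(x))}\chi_{\{g\le n\}}\,d\nu$, which is then handled by Lemmas \ref{Lema 2} and \ref{Lema 3} (the dichotomy ``every Borel set has measure $0$ or $\ge\delta$'' followed by the elementary atomic decomposition), and finally one lets $n\to\infty$. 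You instead observe $\nu\ll\mu$, write $\nu=f\,\mu$, and invoke the Besicovitch differentiation theorem to identify $f(x)$ as the limit of $\nu(B_\varepsilon(x))/\mu(B_\varepsilon(x))$; feeding this into the ball-level reverse H\"older estimate and using that $r(x)>p(x)$ pointwise forces $f$ to vanish $\mu$-a.e.\ off the countable atomic set of $\mu$, whence $\nu$ is purely atomic. Both arguments are sound; yours is shorter and makes the mechanism (a strict exponent gap kills the diffuse part of $\nu$) completely transparent, at the cost of invoking the differentiation theorem for general Radon measures, whereas the paper's is self-contained modulo two elementary lemmas and follows Lions' original decomposition scheme more closely. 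One small point both proofs leave implicit, which you should spell out rather than dispatch ``in one line'': the reverse H\"older hypothesis is stated for smooth $\phi$, and extending it to $\chi_E$ (needed both for your $\nu\ll\mu$ step and for your ball estimates, and likewise for the paper's non-smooth test function) requires monotone approximation by continuous functions for open $E$ together with inner/outer regularity of the finite measures $\mu,\nu$.
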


For the proof of Lemma \ref{Lema 1} we need a couple of preliminary results.

\begin{lema}\label{Lema 2}
Let $\nu$ be a non-negative bounded measure. Assume that there exists $\delta>0$ such that for all $A$ borelian, $\nu(A)=0$ or $\nu(A)\geq\delta$. Then, there exist $\{x_i\}$ and $\nu_i>0$ such that
$$
\nu=\sum \nu_i\delta_{x_i}
$$
\end{lema}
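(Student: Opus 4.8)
The plan is to first peel off the atoms of $\nu$ --- of which there can be only finitely many thanks to the $\delta$-gap hypothesis and boundedness --- and then argue that the remaining measure is non-atomic and is therefore forced to vanish, using the same gap hypothesis together with the compactness of $\overline{\Omega}$ (recall $\Omega$ is bounded).

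First I would observe that if $x$ is an atom of $\nu$, i.e.\ $\nu(\{x\})>0$, then applying the hypothesis to the Borel set $A=\{x\}$ forces $\nu(\{x\})\geq\delta$. Since $\nu(\overline{\Omega})=:M<\infty$ and distinct atoms are disjoint, the set of atoms of $\nu$ has at most $M/\delta$ elements; enumerate them as $x_1,\dots,x_k$ and set $\nu_i:=\nu(\{x_i\})\geq\delta>0$. Define $\sigma:=\nu-\sum_{i=1}^k\nu_i\delta_{x_i}$. I would then check three routine facts: (i) $\sigma$ is a non-negative bounded Borel measure, since for every Borel $A$ one has $\nu(A)\geq\nu(A\cap\{x_1,\dots,x_k\})=\sum_i\nu_i\delta_{x_i}(A)$; (ii) $\sigma$ has no atoms, because for $y\notin\{x_1,\dots,x_k\}$ we have $\sigma(\{y\})=\nu(\{y\})=0$ by the choice of the $x_i$, while $\sigma(\{x_i\})=\nu_i-\nu_i=0$; and (iii) $\sigma$ still satisfies the gap property, because $\sigma(A)=\nu\big(A\setminus\{x_1,\dots,x_k\}\big)$, which is either $0$ or $\geq\delta$.

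Next I would show $\sigma\equiv0$. Fix $x\in\overline{\Omega}$. Since the open balls $B_r(x)$ decrease to $\{x\}$ as $r\downarrow0$ and $\sigma$ is finite, continuity from above gives $\sigma(B_r(x))\to\sigma(\{x\})=0$; hence there is $r_x>0$ with $\sigma(B_{r_x}(x))<\delta$, and the gap property then forces $\sigma(B_{r_x}(x))=0$. The family $\{B_{r_x}(x)\}_{x\in\overline{\Omega}}$ is an open cover of the compact set $\overline{\Omega}$, so finitely many of these balls, say $B_{r_{x_1}}(x_1),\dots,B_{r_{x_N}}(x_N)$, cover $\overline{\Omega}$, and by subadditivity $\sigma(\overline{\Omega})\leq\sum_{j=1}^N\sigma(B_{r_{x_j}}(x_j))=0$. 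Therefore $\nu=\sum_{i=1}^k\nu_i\delta_{x_i}$, which is exactly the asserted form with a finite (hence countable) index set.

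I expect the only delicate points to be the bookkeeping ones: verifying that $\sigma$ genuinely inherits the gap property and is atom-free, so that the continuity-from-above step applies at \emph{every} point of $\overline{\Omega}$, and making sure that step is run with open balls and uses the finiteness of $\sigma$. Compactness of $\overline{\Omega}$ is what upgrades the local vanishing of $\sigma$ to global vanishing; alternatively, one could avoid compactness by invoking the fact that a finite non-atomic Borel measure on a metric space takes all intermediate values (Sierpi\'nski): if $\sigma\not\equiv0$ then $\sigma(\overline{\Omega})\geq\delta$, so one could pick a Borel set $B$ with $\sigma(B)=\delta/2\in(0,\delta)$, contradicting the gap property.
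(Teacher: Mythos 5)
Your proof is correct, and it is indeed as elementary as the paper advertises; the paper itself offers no argument (it simply declares the lemma ``elementary and left to the reader''), so there is no in-paper approach to compare against. Your strategy --- peel off the finitely many atoms (each of mass at least $\delta$, hence at most $\nu(\overline\Omega)/\delta$ of them), verify that the residual measure $\sigma=\nu-\sum_{i}\nu_i\delta_{x_i}$ is non-negative, non-atomic, and inherits the $\delta$-gap, then kill $\sigma$ by local continuity-from-above plus compactness of $\overline\Omega$ --- is complete, and the bookkeeping identity $\sigma(A)=\nu\bigl(A\setminus\{x_1,\dots,x_k\}\bigr)$ is exactly what makes the inherited gap property immediate. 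A nice by-product of your argument, worth noting, is that the conclusion is actually stronger than stated: the index set is necessarily finite, not merely countable. Your alternative closing via Sierpi\'nski's intermediate-value theorem for non-atomic finite measures is if anything cleaner, since it removes the (harmless but implicit) reliance on $\overline\Omega$ being compact and on the metric structure, replacing the covering argument with a single contradiction at the value $\delta/2$.
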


\begin{proof}
The proof is elementary and is left to the reader.
\end{proof}

\begin{lema}\label{Lema 3}
Let $\nu$ be non-negative and bounded measures,such that
$$
\|\psi\|_{L_\nu^{r(x)}(\Omega)}\leq C\|\psi\|_{L_\nu^{p(x)}(\Omega)}
$$
Then there exist $\delta>0$ such that for all $A$ borelian, $\nu(A)=0$ or $\nu(A)\geq\delta$.
\end{lema}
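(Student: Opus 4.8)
The plan is to test the reverse H\"older inequality against characteristic functions. Let $A\subseteq\overline{\Omega}$ be Borel with $\nu(A)>0$ and apply the hypothesis with $\psi=\chi_A$, the characteristic function of $A$. Since $|\chi_A|^{s(x)}=\chi_A$ for any exponent function $s$, the modular of $\chi_A$ with respect to $\nu$ equals $\int_\Omega|\chi_A|^{s(x)}\,d\nu=\nu(A)$, regardless of whether $s=p$ or $s=r$. I may assume $\nu(A)<1$, since otherwise $\nu(A)\ge 1$ and there is nothing to prove once we require $\delta\le 1$. Then both Luxemburg norms $\|\chi_A\|_{L^{p(x)}_\nu(\Omega)}$ and $\|\chi_A\|_{L^{r(x)}_\nu(\Omega)}$ are strictly less than $1$, so the norm--modular inequalities of Proposition \ref{norma.y.rho} — which hold verbatim for the spaces $L^{s(x)}_\nu(\Omega)$ — give
$$
\nu(A)^{1/r^-}\le \|\chi_A\|_{L^{r(x)}_\nu(\Omega)}\qquad\text{and}\qquad \|\chi_A\|_{L^{p(x)}_\nu(\Omega)}\le \nu(A)^{1/p^+}.
$$
Plugging these into $\|\chi_A\|_{L^{r(x)}_\nu(\Omega)}\le C\|\chi_A\|_{L^{p(x)}_\nu(\Omega)}$ yields $\nu(A)^{1/r^--1/p^+}\le C$.

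If $\sup_\Omega p<\inf_\Omega r$, the exponent $1/r^--1/p^+$ is negative, and one immediately obtains $\nu(A)\ge C^{-p^+r^-/(r^--p^+)}$; then $\delta:=\min\{1,\,C^{-p^+r^-/(r^--p^+)}\}$ finishes the proof. The main obstacle is precisely that the hypothesis only provides the pointwise inequality $p(x)<r(x)$, which does not imply $\sup_\Omega p<\inf_\Omega r$, so the exponent $1/r^--1/p^+$ may be nonnegative globally and the last step can fail.

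To remove this obstruction I would localize. By continuity of $p$ and $r$ and compactness of $\overline{\Omega}$, there is a finite cover $B_1,\dots,B_k$ of $\overline{\Omega}$ by relatively open sets with $\sup_{B_i}p<\inf_{B_i}r$ for each $i$ (around each point $x_0$ one has such a ball because $p(x_0)<r(x_0)$). Repeating the computation above for $\chi_{A\cap B_i}$ — which is supported in $B_i$, so that only $\sup_{B_i}p$ and $\inf_{B_i}r$ enter the norm--modular estimates — produces, for each $i$, a constant $\delta_i>0$ such that $\nu(A\cap B_i)=0$ or $\nu(A\cap B_i)\ge\delta_i$ for every Borel $A$. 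Setting $\delta:=\min_{1\le i\le k}\delta_i>0$, suppose some Borel $A$ satisfied $0<\nu(A)<\delta$. Since the $B_i$ cover $\overline{\Omega}$ we would have $\nu(A\cap B_i)>0$ for some $i$, hence $\delta_i\le \nu(A\cap B_i)\le\nu(A)<\delta\le\delta_i$, a contradiction. Therefore $\nu(A)=0$ or $\nu(A)\ge\delta$ for every Borel $A$, which is the assertion. The only remaining routine verifications are the transfer of Proposition \ref{norma.y.rho} to the measure $\nu$ and, for the local step, to functions supported in a fixed $B_i$; both follow directly from the definition of the Luxemburg norm.
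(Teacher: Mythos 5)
Your proof uses the same basic idea as the paper --- test the reverse H\"older inequality against $\chi_A$ and exploit the norm--modular estimates of Proposition \ref{norma.y.rho} --- but you add a localization step that the paper omits, and that step is actually needed. For $\nu(A)<1$ both you and the paper arrive at $\nu(A)^{1/r^-}\le C\,\nu(A)^{1/p^+}$, from which the paper concludes $\nu(A)\ge (1/C)^{p^+r^-/(r^--p^+)}$; as you correctly observe, this last step is valid only when $1/r^--1/p^+<0$, i.e.\ when $p^+<r^-$, and this does not follow from the pointwise hypothesis $p(x)<r(x)$ stated in Lemma \ref{Lema 1}. (In the intended application inside Theorem \ref{ccp}, where $r$ plays the role of $q\le p^*$ and $p$ is the gradient exponent, there is in general no reason to have $q^->p^+$.) Your remedy --- a finite cover of $\overline{\Omega}$ by relatively open sets $B_i$ with $\sup_{B_i}p<\inf_{B_i}r$, obtained by continuity and compactness, the observation that $\chi_{A\cap B_i}$ is supported in $B_i$ so only the local bounds of the exponents enter the Luxemburg-norm estimates, and then $\delta=\min\{1,\min_i\delta_i\}$ --- closes this gap cleanly, and the contradiction you derive from $0<\nu(A)<\delta$ is correct because $\nu(A)>0$ forces $\nu(A\cap B_i)>0$ for some $i$. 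So the proposal is correct; it is essentially the paper's argument, but tightened to remove a tacit and unjustified assumption that $p^+<r^-$.
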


\begin{proof}
First, observe that if $\nu(A)\geq1$,
$$
\int_\Omega\left(\frac{\chi_{A}(x)}{\nu(A)^\frac{1}{p-}}\right)^{p(x)}\, d\nu\leq \int_\Omega \left(\frac{\chi_{A}(x)} {\nu(A)^\frac{1}{p(x)}}\right)^{p(x)}\, d\nu = 1.
$$
Then $\nu(A)^\frac{1}{p-}\geq\|\chi_{A}\|_{L_\nu^{p(x)}}$.
On the other hand,
$$
\int_\Omega \left(\frac{\chi_{A}(x)} {\nu(A)^\frac{1}{r+}}\right)^{r(x)}\, d\nu \geq \int_\Omega\frac{\chi_A(x)}{\nu(A)}\, d\nu = 1.
$$
Then $\nu(A)^\frac{1}{r+}\geq\|\chi_{A}\|_{L_\nu^{r(x)}}$. So we conclude that
$$
\nu(A)^\frac{1}{r+}\leq C\nu(A)^\frac{1}{p-}.
$$

Now, if $\nu(A)<1$, we obtain that
$$
\nu(A)^\frac{1}{r-}\leq C\nu(A)^\frac{1}{p+}.
$$

Combining all these facts, we arrive at
$$
\min\left\{\nu(A)^\frac{1}{r-},\nu(A)^\frac{1}{r+}\right\}\leq C \max\left\{\nu(A)^\frac{1}{p-}, \nu(A)^\frac{1}{p+}\right\}.
$$

Now, if $\nu(A)\leq1$, we have that
$$
\nu(A)^\frac{1}{r-}\leq C\nu(A)^\frac{1}{p+}
$$
Then, $\nu(A)=0$ or
$$
\nu(A)\geq(\frac{1}{C})^{\frac{p^+r^-}{r^- -p^+}}
$$
Finally,
$$
\nu(A)\geq\min\left\{(\frac{1}{C})^{\frac{p^+r^-}{r^- -p^+}},1\right\}
$$
This finishes the proof.
\end{proof}

Now we are ready to prove Lemma \ref{Lema 1}

\begin{proof}[\bf Proof of Lemma \ref{Lema 1}]
By reverse H\"older inequality \eqref{RH}, the measure $\nu$ is absolutely continuous with respect to $\mu$. As consequence there exists $f\in L_\mu^1(\Omega)$, $f\geq0$, such that $\nu=\mu\lfloor f$. Also by \eqref{RH} we have,
$$
\min\left\{\nu(A)^\frac{1}{r-},\nu(A)^\frac{1}{r+}\right\}\leq C\max\left\{\mu(A)^\frac{1}{p-},\mu(A)^\frac{1}{p+}\right\}
$$
for any Borel set $A\subset\Omega$. In particular, $f\in L^\infty_\mu(\Omega)$.
On the other hand the Lebesgue decomposition of $\mu$ with respect to $\nu$ gives us
$$
\mu=\nu\lfloor g + \sigma,\mbox{ where } g\in L^1_\nu(\Omega), g\geq0
$$
and $\sigma$ is a bounded positive measure, singular with respect to $\nu$.

Now consider \eqref{RH} applied to the test function
$$
\phi=g^\frac{1}{r(x)-p(x)}\chi_{\{g\leq n\}}\psi.
$$
We obtain
\begin{align*}
\|g^\frac{1}{r(x)-p(x)}\chi_{\{g\leq n\}}\psi\|_{L^{r(x)}_\nu}&\leq C\|g^\frac{1}{r(x)-p(x)}\chi_{\{g\leq n\}}\psi\|_{L^{p(x)}_\mu}\\
&= C\|g^\frac{1}{r(x)-p(x)}\chi_{\{g\leq n\}}\psi\|_{L^{p(x)}_{g d\nu+d\sigma}}\\
&\leq C\|g^\frac{r(x)}{p(x)(r(x)-p(x))} \chi_{\{g\leq n\}}\psi\|_{L^{p(x)}_\nu} + C\|g^\frac{1}{r(x)-p(x)}\chi_{\{g\leq n\}}\psi\|_{L^{p(x)}_\sigma}
\end{align*}
Since $\sigma\perp\nu$, we have:
$$
\|g^\frac{1}{r(x)-p(x)}\chi_{\{g\leq n\}}\psi\|_{L^{r(x)}_\nu}\leq C\|g^\frac{r(x)}{p(x)(r(x)-p(x))}\chi_{\{g\leq n\}}\psi\|_{L^{p(x)}_\nu}
$$
Hence calling $d\nu_n=g^\frac{r(x)}{(r(x)-p(x))}\chi_{g\leq n} d\nu$ the following reverse H\"older inequality holds.
$$
\|\psi\|_{L^{r(x)}_{\nu_n}}\leq\|\psi\|_{L^{p(x)}_{\nu_n}}
$$
By Lemma \ref{Lema 2} and Lemma \ref{Lema 3}, there exists $x_i^n$ and $K_i^n>0$ such that $\nu_n = \sum_{i\in I} K_i^n\delta_{x_i}^n$.
On the other hand, $\nu_n\nearrow g^\frac{r(x)}{r(x)-p(x)}\nu$. Then, we have
$$
g^\frac{r(x)}{r(x)-p(x)}\nu = \sum_{i\in I} K_i^n\delta_{x_i}^n
$$
where $K_i=g^\frac{r(x_i)}{r(x_i)-p(x_i)}(x_i)\nu(x_i)$.
This finishes the proof.
\end{proof}

The following Lemma follows exactly as in the constant exponent case and the proof is omitted.

\begin{lema}\label{lema 4}
Let $f_n\to f$ a.e and $f_n\rightharpoonup f$ in $L^{p(x)}(\Omega)$ then
$$
\lim_{n\to\infty}\left(\int_\Omega|f_n|^{p(x)}dx-\int_\Omega|f-f_n|^{p(x)}dx\right)=\int_\Omega|f|^{p(x)}dx
$$
\end{lema}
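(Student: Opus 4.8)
The plan is to reduce the statement to the variable-exponent analogue of the Brezis--Lieb lemma. Writing $g_n := f_n - f$, we have $g_n \to 0$ a.e., and the claim is equivalent to
$$
D_n := |g_n + f|^{p(x)} - |g_n|^{p(x)} - |f|^{p(x)} \longrightarrow 0 \quad \text{in } L^1(\Omega).
$$
The first ingredient is the elementary pointwise inequality: for every $\varepsilon>0$ there is a constant $C_\varepsilon>0$, depending only on $\varepsilon$, $p^-$ and $p^+$, such that
$$
\big| |a+b|^t - |a|^t \big| \le \varepsilon\,|a|^t + C_\varepsilon\,|b|^t \qquad \text{for all } a,b\in\R,\ t\in[p^-,p^+].
$$
I would deduce this from the classical fixed-exponent version together with a compactness argument in the parameter $t$, using $1<p^-\le p^+<\infty$: the optimal constant for a given $t$ can be taken to depend continuously on $t$, hence is bounded on the compact interval $[p^-,p^+]$. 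Evaluating at $a=g_n(x)$, $b=f(x)$, $t=p(x)$ gives $|D_n|\le \varepsilon|g_n|^{p(x)} + (C_\varepsilon+1)|f|^{p(x)}$ a.e.\ in $\Omega$.

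Next I would apply the standard truncation device. Set $W_n^\varepsilon := \big(|D_n| - \varepsilon|g_n|^{p(x)}\big)^+$, so that $0\le W_n^\varepsilon \le (C_\varepsilon+1)|f|^{p(x)}$, which lies in $L^1(\Omega)$ since $f\in L^{p(x)}(\Omega)$, and $W_n^\varepsilon\to 0$ a.e.\ because $D_n\to 0$ a.e. Dominated convergence then yields $\int_\Omega W_n^\varepsilon\,dx\to 0$, and from $|D_n|\le W_n^\varepsilon + \varepsilon|g_n|^{p(x)}$ we get
$$
\int_\Omega |D_n|\,dx \le \int_\Omega W_n^\varepsilon\,dx + \varepsilon\int_\Omega |g_n|^{p(x)}\,dx.
$$
Since $f_n\rightharpoonup f$ in $L^{p(x)}(\Omega)$, the sequence $\{f_n\}$, hence $\{g_n\}$, is norm-bounded in $L^{p(x)}(\Omega)$, so by Proposition \ref{norma.y.rho} the modulars $\int_\Omega |g_n|^{p(x)}\,dx$ are bounded by a constant $M$ independent of $n$. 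Therefore $\limsup_{n\to\infty}\int_\Omega |D_n|\,dx \le \varepsilon M$, and letting $\varepsilon\to 0$ finishes the proof.

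The only genuinely new point relative to the constant-exponent argument is the uniformity of $C_\varepsilon$ across the exponent range, which is exactly where $1<p^-\le p^+<\infty$ enters, and I expect that (minor) verification to be the main obstacle. Everything else --- the truncation, the appeal to dominated convergence, and the fact that mere boundedness of $\{f_n\}$ rather than weak convergence is what is actually used --- transcribes essentially verbatim from the classical proof, which is presumably why the authors omit the details.
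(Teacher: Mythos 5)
Your argument is correct and is precisely the Brezis--Lieb proof the paper has in mind when it says the lemma ``follows exactly as in the constant exponent case.'' The one variable-exponent wrinkle you isolate --- uniformity of $C_\varepsilon$ over $t\in[p^-,p^+]$ --- does hold: the explicit constant produced by the usual mean-value/Young (or convexity) computation is a continuous function of $t$, hence bounded on the compact interval $[p^-,p^+]\subset(1,\infty)$, so no abstract compactness argument is really needed.
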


Now we are in position to prove Theorem \ref{ccp}.

\begin{proof}[\bf Proof of Theorem \ref{ccp}]
Given any $\phi\in C^\infty (\Omega)$ we write $v_j=u_j-u$ and by Lemma \ref{lema 4}, we have
$$
\lim_{j\to\infty}\left(\int_\Omega |\phi|^{q(x)}|u_j|^{q(x)}-\int_\Omega|\phi|^{q(x)}|v_j|^{q(x)} dx\right)=\int_\Omega |\phi|^{q(x)}|u|^{q(x)} dx.
$$
On the other hand, by reverse H\"older inequality \eqref{RH} and Lemma \ref{Lema 1}, taking limits we obtain the representation
$$
\nu = |u|^{q(x)} + \sum_{j\in I} \nu_j\delta_{x_j}
$$

\medskip

Let us now show that the points $x_j$ actually belong to the {\em critical set} $\A$.

In fact, assume by contradiction that $x_1\in \Omega\setminus \A$. Let $B=B(x_1,r) \subset\subset \Omega-\A$. Then $q(x)<p^*(x)-\delta$ for some $\delta>0$ in $\overline{B}$ and, by Proposition \ref{embedding}, The embedding $W^{1,p(x)}(B)\hookrightarrow L^{q(x)}(B)$ is compact. Therefore, $u_j\to u$ strongly in $L^{q(x)}(B)$ and so $|u_j|^{q(x)}\to |u|^{q(x)}$ strongly in $L^1(B)$. This is a contradiction to our assumption that $x_1\in B$.

\medskip

Now we proceed with the proof.

Applying \eqref{poincare} to $\phi u_j$ and taking into account that $u_j\to u$ in $L^{p(x)}(\Omega)$, we have
$$
\|\phi\|_{L^{q(x)}_\nu(\Omega)} \leq \|\phi\|_{L^{p(x)}_\mu(\Omega)} + \|(\nabla \phi) u\|_{L^{p(x)}(\Omega)}
$$
Consider $\phi\in C^\infty_c(\R^n)$ such that $0\leq\phi\leq1$, $\phi(0)=1$ and supported in the unit ball of $\R^n$. Fixed $j\in I$, we consider $\varepsilon>0$ such that $B_{\varepsilon}(x_i)\cap B_{\varepsilon}(x_j)=\emptyset$ for $i\neq j$.

We denote by $\phi_{\varepsilon,j}(x):= \varepsilon^{-n}\phi((x-x_j)/\varepsilon)$.

By decomposition of $\nu$, we have:
\begin{align*}
\rho_\nu(\phi_{i_0,\varepsilon}) &:= \int_\Omega|\phi_{i_0,\varepsilon}|^{q(x)}\,d\nu \\
&= \int_\Omega |\phi_{i_0,\varepsilon}|^{q(x)}|u|^{q(x)}\, dx + \sum_{i\in I} \nu_i\phi_{i_0,\varepsilon}(x_i)^{q(x_i)}\\
&\geq \nu_{i_0}.
\end{align*}

From now on, we will denote
\begin{align*}
q^+_{i,\varepsilon}:=\sup_{B_\varepsilon(x_i)}q(x),\qquad q^-_{i,\varepsilon}:=\inf_{B_\varepsilon(x_i)}q(x),\\
p^+_{i,\varepsilon}:=\sup_{B_\varepsilon(x_i)}p(x),\qquad p^-_{i,\varepsilon}:=\inf_{B_\varepsilon(x_i)}p(x).
\end{align*}

If $\rho_\nu(\phi_{i_0,\varepsilon})<1$ then
$$
\|\phi_{i_0,\varepsilon}\|_{L^{q(x)}_\nu(\Omega)} = \|\phi_{i_0,\varepsilon}\|_{L^{q(x)}_\nu (B_\varepsilon(x_{i_0}))} \ge \rho_\nu(\phi_{i_0,\varepsilon})^{1/q^-_{i,\varepsilon}} \ge \nu_{i_0}^{1/q^-_{i,\varepsilon}}.
$$
Analogously, if $\rho_\nu(\phi_{i_0,\varepsilon})>1$ then
$$
\|\phi_{i_0,\varepsilon}\|_{L^{q(x)}_\nu(\Omega)} \ge \nu_{i_0}^{1/q^+_{i,\varepsilon}}.
$$
Then,
$$
\min\left\{\nu_i^\frac{1}{q^+_{i,\varepsilon}}, \nu_i^\frac{1}{q^-_{i,\varepsilon}}\right\} S \leq \|\phi_{i,\varepsilon}\|_{L^{p(x)}_\mu(\Omega)} + \|(\nabla\phi_{i,\varepsilon}) u\|_{L^{p(x)}(\Omega)}.
$$

Now, by Proposition \ref{norma.y.rho} we have
$$\|(\nabla\phi_{i,\varepsilon}) u\|_{L^{p(x)}(\Omega)} \le \max\{\rho((\nabla\phi_{i,\varepsilon}) u)^{1/p^-}; \rho((\nabla\phi_{i,\varepsilon}) u)^{1/p^+}\}.
$$
Now, by H\"older inequality we have
\begin{align*}
\rho((\nabla\phi_{i,\varepsilon}) u)
& =  \int_{\Omega} |\nabla\phi_{i,\varepsilon}|^{p(x)} |u|^{p(x)}\, dx\\
& \leq \| |u|^{p(x)}\|_{L^{\alpha(x)}(B_\varepsilon(x_i))} \||\nabla \phi_{i,\varepsilon}|^{p(x)} \|_{L^{\alpha'(x)}(B_\varepsilon(x_i))},
\end{align*}
where $\alpha(x) = n/(n-p(x))$ and $\alpha'(x) = n/p(x)$.

Moreover, using that $\nabla\phi_{i,\varepsilon}=\nabla\phi\left(\frac{x-x_i}{\varepsilon}\right)\frac{1}{\varepsilon}$, we obtain:
$$
\||\nabla \phi_{i,\varepsilon}|^{p(x)} \|_{L^{\alpha'(x)}(B_\varepsilon(x_i))}\le \max\{\rho(|\nabla \phi_{i,\varepsilon}|^{p(x)})^{p^+/n};
\rho(|\nabla \phi_{i,\varepsilon}|^{p(x)})^{p^-/n}\},
$$
and
\begin{align*}
\rho(|\nabla \phi_{i,\varepsilon}|^{p(x)})
& = \int_{B_\varepsilon(x_i)} |\nabla\phi_{i,\varepsilon}|^n\, dx\\
& = \int_{B_\varepsilon(x_i)} |\nabla\phi(\frac{x-x_i}{\varepsilon})|^n \frac{1}{\varepsilon^n}\, dx \\
& = \int_{B_1(0)} |\nabla\phi(y)|^n\, dy.
\end{align*}
Then, $\nabla\phi_{i,\varepsilon} u\to 0$ strongly in $L^{p(x)}(\Omega)$.
On the other hand,
$$
\int_\Omega |\phi_{i,\varepsilon}|^{p(x)}\,d\mu\leq \mu(B_{\varepsilon}(x_i))=\mu_i
$$
Therefore,
\begin{align*}
\|\phi_{i,\varepsilon}\|_{L^{p(x)}(\Omega)} &= \|\phi_{i,\varepsilon}\|_{L^{p(x)}(B_\varepsilon(x_i))} \\
&\leq  \max \left\{ \rho_\mu (\phi_{i,\varepsilon})^\frac{1}{p^+_{i,\varepsilon}}, \rho_\mu(\phi_{i,\varepsilon})^\frac{1} {p^-_{i,\varepsilon}}\right\}\\
&\le \max \left\{ \mu_i^\frac{1}{p^+_{i,\varepsilon}}, \mu_i^\frac{1}{p^-_{i,\varepsilon}}\right\},
\end{align*}
so we obtain,
$$
S\min \left\{ \nu_i^\frac{1}{q^+_{i,\varepsilon}}, \nu_i^\frac{1}{q^-_{i,\varepsilon}}\right\} \leq \max \left\{ \mu_i^\frac{1}{p^+_{i,\varepsilon}}, \mu_i^\frac{1}{p^-_{i,\varepsilon}}\right\}.
$$
As $p$ and $q$ are continuous functions and as $q(x_i) = p^*(x_i)$, letting $\varepsilon\to 0$, we get
$$
S \nu_i^{1/p^*(x_i)} \le \mu_i^{1/p(x_i)}
$$

\medskip

Finally, we show that $\mu\geq|\nabla u|^{p(x)}+\Sigma\mu_i\delta_{x_i}$.

In fact, we have that $\mu\geq\sum\mu_i\delta_{x_i}$.
On the other hand $u_j\rightharpoonup u$ weakly in $W_0^{1,p(x)}(\Omega)$ then $\nabla u_j\rightharpoonup\nabla u$ weakly in $L^{p(x)}(U)$ for all $U\subset\Omega$. By weakly lower semi continuity  of norm we obtain that $d\mu\geq|\nabla u|^{p(x)}\,dx$ and, as $|\nabla u|^{p(x)}$ is orthogonal to $\mu_1$, we conclude the desired result.

This finishes the proof.
\end{proof}

\section{Applications}

In this section, we apply Theorem \ref{ccp} to study the existence of nontrivial solutions of the problem
\begin{equation}\label{aplicacion}
\begin{cases}
-\Delta_{p(x)} u=|u|^{q(x)-2}u+\lambda(x)|u|^{r(x)-2}u & \mbox{in }\Omega,\\
u=0&\mbox{ in }\partial\Omega,
\end{cases}
\end{equation}
where $r(x)<p^*(x)-\varepsilon$, $q(x)\leq p^*(x)$ and $\A = \{x\in\Omega \colon q(x)=p^*(x)\}\neq\emptyset$.
We define $A_\delta := \bigcup_{x\in \A} (B_\delta(x)\cap\Omega) = \{x\in \Omega\colon \mbox{dist}(x,\A)<\delta\}$.

The ideas for this application follow those in the paper \cite{GAP}.

\medskip

For (weak) solutions of \eqref{aplicacion} we understand critical points of the functional
$$
\mathcal{F}(u)=\int_\Omega\frac{|\nabla u|^{p(x)}}{p(x)}-\frac{|u|^{q(x)}}{q(x)}-\lambda(x)\frac{|u|^{r(x)}}{r(x)}\,dx
$$

\subsection{Proof of Theorem \ref{r>p}}
We begin by proving the Palais-Smale condition for the functional $\mathcal{F}$, below certain level of energy.

\begin{lema}\label{acotada}
Assume that $r\le q$. Let $\{u_j\}_{j\in\N}\subset W_0^{1,p(x)}(\Omega)$ a Palais-Smale sequence then $\{u_j\}_{j\in\N}$ is bounded in $W_0^{1,p(x)}(\Omega)$.
\end{lema}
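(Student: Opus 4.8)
The plan is to show boundedness of a Palais--Smale sequence $\{u_j\}$ by the standard device of combining $\mathcal{F}(u_j)=c+o(1)$ with $\langle \mathcal{F}'(u_j),u_j\rangle = o(1)\|u_j\|$, and exploiting the gap between the homogeneity exponents. First I would write out
\[
\mathcal{F}(u_j) - \frac{1}{q^-}\langle \mathcal{F}'(u_j),u_j\rangle
= \int_\Omega \Big(\frac{1}{p(x)}-\frac{1}{q^-}\Big)|\nabla u_j|^{p(x)}
+ \Big(\frac{1}{q^-}-\frac{1}{q(x)}\Big)|u_j|^{q(x)}
+ \lambda(x)\Big(\frac{1}{q^-}-\frac{1}{r(x)}\Big)|u_j|^{r(x)}\,dx.
\]
Using the hypotheses $p^+ \le q^-$ (which holds since $\max_{\overline\Omega}p\le\min_{\overline\Omega}q$ in Theorem \ref{ccp}'s setting) and $r\le q$, each coefficient in the integrand is nonnegative except possibly the last one; but $\frac{1}{q^-}-\frac{1}{r(x)}\le 0$ only when $r(x)\le q^-$, and since $r\le q$ everywhere, $\frac1{q^-}-\frac1{r(x)}$ has a sign we must track. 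The cleanest route is to subtract $\frac{1}{r^-}\langle\mathcal F'(u_j),u_j\rangle$ or, better, to note $\frac1{q(x)}\le\frac1{r(x)}$ pointwise so that grouping the $q$ and $r$ terms together with a single factor $\frac1{r^-}$ leaves the $\lambda$-term with coefficient $\frac1{r^-}-\frac1{r(x)}\ge 0$ and the $q$-term with coefficient $\frac1{r^-}-\frac1{q(x)}\ge \frac1{q^-}-\frac1{q(x)}\ge 0$. So I would use the combination $\mathcal F(u_j)-\frac{1}{r^-}\langle\mathcal F'(u_j),u_j\rangle$, giving
\[
c + o(1) + o(1)\|u_j\| \;\ge\; \Big(\frac{1}{p^+}-\frac{1}{r^-}\Big)\int_\Omega |\nabla u_j|^{p(x)}\,dx,
\]
where $\frac{1}{p^+}-\frac{1}{r^-}>0$ because $p^+\le r^-$ (from $\max_{\overline\Omega}p\le\min_{\overline\Omega}r$).

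Next I would convert the modular bound $\int_\Omega|\nabla u_j|^{p(x)}\,dx \le C(1+\|u_j\|)$ into a norm bound. By Proposition \ref{norma.y.rho}, if $\|u_j\| = \||\nabla u_j|\|_{L^{p(x)}(\Omega)}>1$ then $\|u_j\|^{p^-}\le \int_\Omega|\nabla u_j|^{p(x)}\,dx$, so we get $\|u_j\|^{p^-}\le C(1+\|u_j\|)$. Since $p^->1$, this inequality forces $\|u_j\|$ to stay bounded: if a subsequence had $\|u_j\|\to\infty$, dividing by $\|u_j\|$ would give $\|u_j\|^{p^--1}\le C(\|u_j\|^{-1}+1)$, contradicting $p^->1$. (The case $\|u_j\|\le 1$ along a subsequence is trivially bounded.) This yields the claim.

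The main obstacle, and the only place requiring genuine care, is the sign bookkeeping of the exponent coefficients: one must choose the right multiple of $\langle\mathcal F'(u_j),u_j\rangle$ to subtract so that \emph{all} three resulting integrands have coefficients of a favorable sign (nonnegative for the lower-order $q$ and $r$ terms, strictly positive for the leading $p$ term), and this hinges precisely on the chain $p^+\le r^-\le q^-$ together with $r(x)\le q(x)$ pointwise. A secondary technicality is that $\mathcal F'(u_j)\to 0$ in the dual norm only gives $\langle\mathcal F'(u_j),u_j\rangle = o(1)\|u_j\|$, not $o(1)$, so one genuinely needs the superlinear growth ($p^->1$) in the final step to absorb the $\|u_j\|$ on the right-hand side; I would make sure to state this absorption argument explicitly rather than treat it as automatic. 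The passage between modular and norm via Proposition \ref{norma.y.rho} is routine but should be invoked by name.
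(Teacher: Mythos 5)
Your proof is correct and follows essentially the same route as the paper: subtract $\frac{1}{r^-}\langle\mathcal F'(u_j),u_j\rangle$ from $\mathcal F(u_j)$, use $p^+\le r^-$, $r(x)\le q(x)$, and $r^-\le r(x)$ to drop the nonnegative lower-order terms and isolate a positive multiple of $\int_\Omega|\nabla u_j|^{p(x)}\,dx$, then pass to the norm via Proposition~\ref{norma.y.rho} and absorb the $o(1)\|u_j\|$ contribution using $p^->1$. Your version is in fact a bit more careful than the paper's at the final absorption step, which the paper leaves implicit.
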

\begin{proof}
By definition
$$
\mathcal{F}(u_j)\to c \qquad \mbox{and}\qquad \mathcal{F}'(u_j)\to 0.
$$
Now, we have
$$
c+1 \geq \mathcal{F}(u_j) = \mathcal{F}(u_j) - \frac{1}{r-} \langle \mathcal{F}'(u_j), u_j \rangle + \frac{1}{r-} \langle \mathcal{F}'(u_j), u_j \rangle,
$$
where
$$
\langle \mathcal{F}'(u_j), u_j \rangle = \int_\Omega |\nabla u_j|^{p(x)} - |u_j|^{q(x)} - |u_j|^{r(x)}\, dx.
$$
Then, if $r(x)\leq q(x)$ we conclude
$$
c+1 \geq \left(\frac{1}{p+} - \frac{1}{r-}\right) \int_\Omega |\nabla u_j|^{p(x)}\, dx - \frac{1}{r-} |\langle \mathcal{F}'(u_j), u_j \rangle|.
$$
We can assume that $\|u_j\|_{W_0^{1,p(x)}(\Omega)}\geq 1$. As $\|\mathcal{F}'(u_j)\|$ is bounded we have that
$$
c+1 \geq \left(\frac{1}{p+} - \frac{1}{r-}\right) \|u_j\|^{p^-}_{W_0^{1,p(x)}(\Omega)} - \frac{C}{r-}\|u_j\|_{W_0^{1,p(x)}(\Omega)}.
$$
We deduce that $u_j$ is bounded.

This finishes the proof.
\end{proof}

From the fact that $\{u_j\}_{j\in\N}$ is a Palais-Smale sequence it follows, by Lemma \ref{acotada}, that $\{u_j\}_{j\in\N}$ is bounded in $W_0^{1,p(x)}(\Omega)$. Hence, by Theorem \ref{ccp}, we have
\begin{align}
&|u_j|^{q(x)}\rightharpoonup \nu=|u|^{q(x)} + \sum_{i\in I} \nu_i\delta_{x_i} \quad \nu_i>0,\\
&|\nabla u_j|^{p(x)}\rightharpoonup \mu \geq |\nabla u|^{p(x)} + \sum_{i\in I} \mu_i \delta_{x_i}\quad \mu_i>0,\\
&S \nu_i^{1/p^*(x_i)} \leq \mu_i^{1/p(x_i)}.
\end{align}

Note that if $I=\emptyset$ then $u_j\to u$ strongly in $L^{q(x)}(\Omega)$. We know that $\{x_i\}_{i\in I}\subset \A$.

Let us show that if $c < \left(\frac{1}{p^+} - \frac{1}{q^-_\A}\right)S^n$ and $\{u_j\}_{j\in\N}$ is a Palais-Smale sequence, with energy level $c$, then $I=\emptyset$.

\medskip

In fact, suppose that $I \not= \emptyset$. Then let $\phi\in C_0^\infty(\R^n)$ with support in the unit ball of $\R^n$. Consider, as in the previous section, the rescaled functions $\phi_{i,\varepsilon}(x) = \phi(\frac{x-x_i}{\varepsilon})$.

As $\mathcal{F}'(u_j)\to 0$ in $(W_0^{1,p(x)}(\Omega))'$, we obtain that
$$
\lim_{j\to\infty} \langle \mathcal{F}'(u_j), \phi_{i,\varepsilon}u_j \rangle = 0.
$$
On the other hand,
$$
\langle \mathcal{F}'(u_j), \phi_{i,\varepsilon} u_j \rangle = \int_\Omega |\nabla u_j|^{p(x)-2}\nabla u_j\nabla(\phi_{i,\varepsilon}u_j) - \lambda(x) |u_j|^{r(x)}\phi_{i,\varepsilon} - |u_j|^{q(x)}\phi_{i,\varepsilon}\, dx
$$
Then, passing to the limit as $j\to\infty$, we get
\begin{align*}
0 = &\lim_{j\to\infty} \left(\int_\Omega |\nabla u_j|^{p(x)-2} \nabla u_j \nabla(\phi_{i,\varepsilon}) u_j\, dx\right)\\
& + \int_\Omega \phi_{i,\varepsilon}\, d\mu - \int_\Omega \phi_{i,\varepsilon}\, d\nu
- \int_\Omega\lambda(x) |u|^{r(x)}\phi_{i,\varepsilon}\, dx.
\end{align*}
By H\"older inequality, it is easy to check that
$$
\lim_{j\to\infty} \int_\Omega |\nabla u_j|^{p(x)-2} \nabla u_j \nabla(\phi_{i,\varepsilon}) u_j\, dx = 0.
$$
On the other hand,
$$
\lim_{\varepsilon\to 0} \int_\Omega \phi_{i,\varepsilon}\, d\mu = \mu_i\phi(0),\qquad
\lim_{\varepsilon\to 0} \int_\Omega \phi_{i,\varepsilon}\, d\nu = \nu_i\phi(0).
$$
and
$$
\lim_{\varepsilon\to 0} \int_\Omega \lambda(x)|u|^{r(x)}\phi_{i,\varepsilon}\, dx = 0.
$$
So, we conclude that $(\mu_i-\nu_i)\phi(0)=0$, i.e, $\mu_i=\nu_i$. Then,
$$
S \nu_i^{1/p^*(x_i)} \leq \nu_i^{1/p(x_i)},
$$
so it is clear that $\nu_i = 0$ or $S^n\leq\nu_i$.

On the other hand, as $r^->p^+$,
\begin{align*}
c =& \lim_{j\to\infty} \mathcal{F}(u_j) = \lim_{j\to\infty} \mathcal{F}(u_j) - \frac{1}{p+} \langle \mathcal{F}'(u_j), u_j \rangle\\
=& \lim_{j\to\infty} \int_\Omega \left(\frac{1}{p(x)} - \frac{1}{p+}\right) |\nabla u_j|^{p(x)}\, dx + \int_\Omega \left(\frac{1}{p+} - \frac{1}{q(x)}\right) |u_j|^{q(x)}\, dx\\
& + \lambda \int_\Omega \left(\frac{1}{p+}-\frac{1}{r(x)}\right)| u_j|^{r(x)}\, dx\\
\geq& \lim_{j\to\infty} \int_\Omega \left(\frac{1}{p+}-\frac{1}{q(x)}\right) |u_j|^{q(x)}\, dx\\
\geq& \lim_{j\to\infty} \int_{\A_\delta} \left(\frac{1}{p+}-\frac{1}{q(x)}\right) |u_j|^{q(x)}\, dx\\
\geq&\lim_{j\to\infty} \int_{\A_\delta} \left(\frac{1}{p+}-\frac{1}{q^-_{\A_\delta}}\right) |u_j|^{q(x)}\, dx
\end{align*}
But
\begin{align*}
\lim_{j\to\infty} \int_{\A_\delta} \left(\frac{1}{p+} - \frac{1}{q^-_{\A_\delta}}\right) |u_j|^{q(x)}\, dx &= \left(\frac{1}{p+}-\frac{1}{q^-_{\A_\delta}}\right) \left(\int_{\A_\delta}|u|^{q(x)}\, dx + \sum_{j\in I} \nu_j\right)\\
&\geq \left(\frac{1}{p+} - \frac{1}{q^-_{\A_\delta}}\right) \nu_i\\
&\geq \left(\frac{1}{p+}-\frac{1}{q^-_{\A_\delta}}\right) S^n.
\end{align*}
As $\delta>0$ is arbitrary, and $q$ is continuous, we get
$$
c\ge \left(\frac{1}{p+}-\frac{1}{q^-_{\A}}\right) S^n.
$$

Therefore, if
$$
c < \left(\frac{1}{p+} - \frac{1}{q^-_{\A}}\right)S^n,
$$
the index set $I$ is empty.

Now we are ready to prove the Palais-Smale condition below level $c$.
\begin{teo}\label{Lemma.PS}
Let $\{u_j\}_{j\in\N}\subset W_0^{1,p(x)}(\Omega)$ be a  Palais-Smale sequence, with energy level $c$. If $c < \left(\frac{1}{p+} - \frac{1}{q^-_{\A}}\right) S^n$, then there exist $u\in W_0^{1,p(x)}(\Omega)$ and $\{u_{j_k}\}_{k\in\N}\subset \{u_j\}_{j\in\N}$ a subsequence such that $u_{j_{k}}\to u$ strongly in $W_0^{1,p(x)}(\Omega)$.
\end{teo}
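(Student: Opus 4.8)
The plan is to extract the strongly convergent subsequence in two stages: first use Theorem \ref{ccp} to rule out concentration, so that the critical term $|u_j|^{q(x)}$ and the lower order term converge strongly in the relevant Lebesgue spaces, and then exploit the strict monotonicity of the operator $\xi\mapsto|\xi|^{p(x)-2}\xi$ to upgrade this to strong convergence of the gradients. First I would apply Lemma \ref{acotada} to deduce that $\{u_j\}_{j\in\N}$ is bounded in $W_0^{1,p(x)}(\Omega)$, pass to a subsequence with $u_j\rightharpoonup u$ weakly in $W_0^{1,p(x)}(\Omega)$ and $u_j\to u$ a.e., and note $u_j\to u$ strongly in $L^{r(x)}(\Omega)$ by the compact embedding of Proposition \ref{embedding}, since $r(x)\le p^*(x)-\eta$. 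Theorem \ref{ccp} then provides the measures $\mu\ge|\nabla u|^{p(x)}+\sum_{i\in I}\mu_i\delta_{x_i}$ and $\nu=|u|^{q(x)}+\sum_{i\in I}\nu_i\delta_{x_i}$, and the computation carried out just before the statement of this theorem shows that the energy bound $c<\big(\tfrac1{p^+}-\tfrac1{q^-_\A}\big)S^n$ forces $I=\emptyset$. Hence $\nu=|u|^{q(x)}$; testing the weak-$*$ convergence $|u_j|^{q(x)}\rightharpoonup\nu$ against $1\in C(\overline\Omega)$ gives $\int_\Omega|u_j|^{q(x)}\,dx\to\int_\Omega|u|^{q(x)}\,dx$, and combining this with the Brezis--Lieb identity of Lemma \ref{lema 4} and Proposition \ref{norma.y.rho} yields $u_j\to u$ strongly in $L^{q(x)}(\Omega)$.

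Next I would test $\mathcal{F}'(u_j)\to 0$ against $u_j-u$. Since $\{u_j-u\}$ is bounded, $\langle\mathcal{F}'(u_j),u_j-u\rangle\to0$, and writing out the three terms of $\mathcal{F}'$ this becomes
$$
\int_\Omega|\nabla u_j|^{p(x)-2}\nabla u_j\cdot\nabla(u_j-u)\,dx=\langle\mathcal{F}'(u_j),u_j-u\rangle+\int_\Omega\big(|u_j|^{q(x)-2}u_j+\lambda(x)|u_j|^{r(x)-2}u_j\big)(u_j-u)\,dx.
$$
The right-hand side tends to $0$: the pairing by the above, and the integral by the H\"older inequality of Proposition \ref{Holder}, because $|u_j|^{q(x)-2}u_j$ is bounded in $L^{q'(x)}(\Omega)$ while $\|u_j-u\|_{L^{q(x)}(\Omega)}\to0$, and likewise for the $r(x)$ term using $\lambda\in L^\infty(\Omega)$ and the strong convergence in $L^{r(x)}(\Omega)$. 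Since $\nabla u_j\rightharpoonup\nabla u$ in $L^{p(x)}(\Omega)$ and $|\nabla u|^{p(x)-2}\nabla u\in L^{p'(x)}(\Omega)$, also $\int_\Omega|\nabla u|^{p(x)-2}\nabla u\cdot\nabla(u_j-u)\,dx\to0$, so subtracting I obtain
$$
I_j:=\int_\Omega\big(|\nabla u_j|^{p(x)-2}\nabla u_j-|\nabla u|^{p(x)-2}\nabla u\big)\cdot(\nabla u_j-\nabla u)\,dx\longrightarrow 0.
$$

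To finish, the integrand of $I_j$ is nonnegative, hence converges to $0$ a.e. along a further subsequence; since for each fixed $x$ the map $\xi\mapsto|\xi|^{p(x)-2}\xi$ is strictly monotone and coercive, this forces $\nabla u_j\to\nabla u$ a.e. Then $|\nabla u_j|^{p(x)-2}\nabla u_j$, being bounded in $L^{p'(x)}(\Omega)$ and convergent a.e., converges weakly to $|\nabla u|^{p(x)-2}\nabla u$; feeding this into the expanded form of $I_j$ together with $I_j\to0$ and $\int_\Omega|\nabla u|^{p(x)-2}\nabla u\cdot\nabla u_j\,dx\to\int_\Omega|\nabla u|^{p(x)}\,dx$ gives $\int_\Omega|\nabla u_j|^{p(x)}\,dx\to\int_\Omega|\nabla u|^{p(x)}\,dx$. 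One last application of the Brezis--Lieb lemma to $\nabla u_j$ then yields $\int_\Omega|\nabla(u_j-u)|^{p(x)}\,dx\to0$, and by Proposition \ref{norma.y.rho} this means $\|u_j-u\|_{W_0^{1,p(x)}(\Omega)}\to0$, which is the claim.

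I expect the main obstacle to be this last step when $1<p(x)<2$: deducing a.e. convergence of the gradients from $I_j\to0$ relies on the degenerate monotonicity inequality $(|\xi|^{p-2}\xi-|\zeta|^{p-2}\zeta)\cdot(\xi-\zeta)\ge c_p(|\xi|+|\zeta|)^{p-2}|\xi-\zeta|^2$ and on the coercivity of $\xi\mapsto(|\xi|^{p-2}\xi-|\zeta|^{p-2}\zeta)\cdot(\xi-\zeta)$ to exclude blow-up of $|\nabla u_j(x)|$ on a positive-measure set; everything after a.e. convergence of the gradients is soft. Alternatively one could simply invoke the $(S_+)$-property of the $p(x)$-Laplacian from the variable-exponent literature.
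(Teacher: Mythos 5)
Your proof is correct, and the first half — boundedness, extraction of the concentration measures via Theorem \ref{ccp}, the preceding energy estimate forcing $I=\emptyset$, and the Brezis--Lieb step yielding $u_j\to u$ in $L^{q(x)}(\Omega)$ — is exactly the route the paper takes. The second half diverges: after establishing strong $L^{q(x)}$ and $L^{r(x)}$ convergence, the paper writes the equation as $-\Delta_{p(x)}u_j=f_j$ with $f_j:=|u_j|^{q(x)-2}u_j+\lambda|u_j|^{r(x)-2}u_j+\mathcal{F}'(u_j)$, shows $f_j$ converges strongly in $(W_0^{1,p(x)}(\Omega))'$ by H\"older plus dominated convergence, and then simply asserts (without proof or reference) that the solution operator $T=(-\Delta_{p(x)})^{-1}\colon (W_0^{1,p(x)}(\Omega))'\to W_0^{1,p(x)}(\Omega)$ is continuous, so $u_j=T(f_j)\to T(f)=u$ strongly. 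You instead test $\mathcal{F}'(u_j)\to 0$ against $u_j-u$, isolate the monotonicity term $I_j=\int_\Omega\big(|\nabla u_j|^{p(x)-2}\nabla u_j-|\nabla u|^{p(x)-2}\nabla u\big)\cdot\nabla(u_j-u)\,dx\to 0$, and run the $(S_+)$-property argument to get a.e.\ convergence of gradients, then upgrade to norm convergence via Brezis--Lieb once more. Both are valid; your version is more self-contained and actually supplies the content that the paper's bare assertion ``$T$ is a continuous invertible operator'' hides (the standard proof of that continuity for the $p(x)$-Laplacian is, in effect, exactly the $(S_+)$-argument you write out), at the cost of being somewhat longer. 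Your closing remark about the degenerate case $1<p(x)<2$ is apt but not an actual obstruction: for a.e.\ convergence one only needs strict monotonicity plus the coercivity estimate to exclude $|\nabla u_j(x)|\to\infty$, both of which hold for all $p(x)>1$.
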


\begin{proof}
We have that $\{u_j\}_{j\in\N}$ is bounded. Then, for a subsequence that we still denote $\{u_j\}_{j\in\N}$, $u_j\to u$ strongly in $L^{q(x)}(\Omega)$. We define $\mathcal{F}'(u_j):=\phi_j$. By the Palais-Smale condition, with energy level c, we have $\phi_j\to 0$ in $(W_0^{1,p(x)}(\Omega))'$.

By definition $\langle \mathcal{F}'(u_j), z \rangle = \langle \phi_j, z \rangle$ for all $z\in W_0^{1,p(x)}(\Omega)$, i.e,
$$
\int_\Omega |\nabla u_j|^{p(x)-2}\nabla u_j\nabla z\, dx - \int_\Omega |u_j|^{q(x)-2} u_j z\, dx - \int_\Omega\lambda(x) |u_j|^{r(x)-2} u_j z\, dx = \langle \phi_j, z \rangle.
$$
Then, $u_j$ is a weak solution of the following equation.
\begin{equation}
\begin{cases}
-\Delta_{p(x)}u_j=|u_j|^{q(x)-2}u_j+\lambda(x)|u_j|^{r(x)-2}u_j+\phi_j=: f_j &\mbox{in }\Omega,\\
u_j = 0 &\mbox{on }\partial\Omega.
\end{cases}
\end{equation}
We define $T\colon (W_0^{1,p(x)}(\Omega))' \to W_0^{1,p(x)}(\Omega)$, $T(f):=u$ where $u$ is the weak solution of the following equation.
\begin{equation}
\begin{cases}
-\Delta_{p(x)}u=f & \mbox{in } \Omega,\\
u = 0 & \mbox{on } \partial\Omega.
\end{cases}
\end{equation}
Then $T$ is a continuous invertible operator.

It is sufficient to show that $f_j$ converges in $(W^{1,p(x)}_0(\Omega))'$. We only need to prove that $|u_j|^{q(x)-2}u_j \to |u|^{q(x)-2}u$ strongly in $(W_0^{1,p(x)}(\Omega))'$.

In fact,
\begin{align*}
\langle|u_j|^{q(x)-2}u_j-|u|^{q(x)-2}u,\psi\rangle&=\int_\Omega(|u_j|^{q(x)-2}u_j-|u|^{q(x)-2}u)\psi\,dx\\
&\leq\|\psi\|_{L^{q(x)}(\Omega)}\|(|u_j|^{q(x)-2}u_j-|u|^{q(x)-2}u)\|_{L^{q'(x)}(\Omega)}.
\end{align*}
Therefore,
\begin{align*}
\|(|u_j|^{q(x)-2}u_j - |u|^{q(x)-2}u)\|_{(W_0^{1,p(x)}(\Omega))'} &= \sup_{\genfrac{}{}{0cm}{}{\psi\in W^{1,p(x)}_0(\Omega)} {\|\psi\|_{W^{1,p(x)}_0(\Omega)}=1}} \int_\Omega (|u_j|^{q(x)-2}u_j-|u|^{q(x)-2}u)\psi\, dx\\
&\leq \|(|u_j|^{q(x)-2}u_j - |u|^{q(x)-2}u)\|_{L^{q'(x)}(\Omega)}
\end{align*}
and now, by the Dominated Convergence Theorem this last term goes to zero as $j\to\infty$.

The proof is finished.
\end{proof}

We are now in position to prove Theorem \ref{r>p}.

\begin{proof}[\bf Proof of Theorem \ref{r>p}]
In view of the previous result, we seek for critical values below level $c$. For that purpose, we want to use the Mountain Pass Theorem. Hence we have to check the following condition:
\begin{enumerate}
\item There exist constants $R,r>0$ such that when $\|u\|_{W^{1,p(x)}(\Omega)}=R$, then $\mathcal{F}(u)>r$.

\item There exist $v_0\in W^{1,p(x)}(\Omega)$ such that $\mathcal{F}(v_0)<r$.
\end{enumerate}
Let us first check (1). We suppose that $\| |\nabla u| \|_{L^{p(x)}(\Omega)}\leq 1$ and $\| u\|_{L^{p(x)}(\Omega)}\leq 1$. The other cases can be treated similarly.

By Poincar\'e inequality (Proposition \ref{poincare}) we have,
\begin{align*}
\int_\Omega \frac{|\nabla u|^{p(x)}}{p(x)} - &\frac{|u|^{q(x)}}{q(x)} - \lambda(x)\frac{|u|^{r(x)}}{r(x)}\, dx\\
&\geq \frac{1}{p+} \int_\Omega|\nabla u|^{p(x)}\, dx - \frac{1}{q-} \int_\Omega|u|^{q(x)}\, dx - \frac{\|\lambda\|_\infty}{r-} \int_\Omega|u|^{r(x)}\, dx\\
&\geq \frac{1}{p+} \| |\nabla u| \|^{p+} - \frac{1}{q-} \|u\|_{L^{q(x)}(\Omega)}^{q-} - \frac{\|\lambda\|_\infty}{r-} \|u\|_{L^{r(x)}(\Omega)}^{r-}\\
&\geq \frac{1}{p+} \| |\nabla u| \|^{p+} - \frac{C}{q-}\| |\nabla u| \|_{L^{p(x)}(\Omega)}^{q-} - \frac{C\|\lambda\|_\infty}{r-} \| |\nabla u| \|_{L^{p(x)}(\Omega)}^{r-}.
\end{align*}
Let $g(t) = \frac{1}{p+} t^{p+} - \frac{C}{q-} t^{q-} - \frac{C\|\lambda\|_\infty}{r-}t^{r-}$, then it is easy to check that $g(R)>r$ for some $R,r>0$. This proves (1).

Now (2) is immediate as for a fixed $w\in W_0^{1,p(x)}(\Omega)$ we have
$$
\lim_{t\to \infty}\mathcal{F}(tw) = -\infty.
$$

\medskip

Now the candidate for critical value according to the Mountain Pass Theorem is
$$
c = \inf_{g\in \mathcal{C}} \sup_{t\in[0,1]} \mathcal{F}(g(t)),
$$
where $\mathcal{C}=\{g:[0,1]\to W_0^{1,p(x)}(\Omega)\colon g \mbox{ continuous and } g(0)=0, g(1)=v_0\}$.

We will show that, if $\inf_{x\in\A_\delta}\lambda(x)$ is big enough for some $\delta>0$ then $c < \left(\frac{1}{p+} - \frac{1}{q^-_{\A}}\right) S^n$ and so the local Palais-Smale condition (Theorem \ref{Lemma.PS}) can be applied.

We fix $w\in W_0^{1,p(x)}(\Omega)$. Then, if $t<1$ we have
\begin{align*}
\mathcal{F}(tw) &\leq \int_\Omega t^{p(x)}\frac{|\nabla w|^{p(x)}}{p-} - t^{q(x)} \frac{|w|^{q(x)}}{q+} - \lambda(x) t^{r(x)} \frac{| w|^{r(x)}}{r+}\, dx\\
&\leq \frac{t^{p-}}{p-} \int_\Omega |\nabla w|^{p(x)}\, dx - \frac{t^{r+}}{r+}  \int_\Omega \lambda(x)|w|^{r(x)}\, dx\\
&\leq \frac{t^{p-}}{p-} \int_\Omega |\nabla w|^{p(x)}\, dx - \frac{t^{r+}}{r+}  \int_{\A_\delta} \lambda(x)|w|^{r(x)}\, dx\\
&\leq \frac{t^{p-}}{p-} \int_\Omega |\nabla w|^{p(x)}\, dx - \frac{t^{r+}}{r+}  \int_{\A_\delta}(\inf_{x\in \A_\delta} \lambda(x))|w|^{r(x)}\, dx
\end{align*}
We define $g(t) := \frac{t^{p-}}{p-} a_1 -(\inf_{x\in \A_\delta} \lambda(x))\frac{t^{r+}}{r+} a_3$, where $a_1$ and $a_2$ are given by $a_1 = \| |\nabla w|^{p(x)}\|_{L^1(\Omega)}$ and $a_3 = \| |w|^{r(x)}\|_{L^1(\A_\delta)}$.

The maximum of $g$ is attained at $t_\lambda = \left(\frac{a_1 }{(\inf_{x\in \A_\delta} \lambda(x)) a_3}\right)^\frac{1}{r+-p-}$. So, we conclude that there exists $\lambda_0>0$ such that if $(\inf_{x\in \A_\delta} \lambda(x))\geq\lambda_0$ then
$$
\mathcal{F}(tw) < \left(\frac{1}{p+} - \frac{1}{q^-_{\A}}\right) S^n
$$
This finishes the proof.
\end{proof}

\begin{remark}
Observe that if $\lambda(x)$ is continuous it suffices to assume that $\lambda(x)$ is large in the {\em criticality set} $\A$.
\end{remark}

\subsection{Proof of Theorem \ref{r<p}}
Now it remains to prove Theorem \ref{r<p}. So we begin by checking the Palais-Smale condition for this case.

\begin{lema}
Let $\{u_j\}_{j\in\N}\subset W^{1,p(x)}_0(\Omega)$ be a Palais-Smale sequence for $\mathcal{F}$ then $\{u_j\}_{j\in\N}$ is bounded.
\end{lema}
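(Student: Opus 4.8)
The plan is to re-run the Ambrosetti--Rabinowitz-type estimate of Lemma~\ref{acotada}, now using that in Theorem~\ref{r<p} the exponent $q$ dominates, i.e. $r^+\le p^-\le p^+\le q^-$. Argue by contradiction: if $\{u_j\}_{j\in\N}$ is unbounded, then after passing to a subsequence (still a Palais--Smale sequence) we may assume $\|u_j\|:=\|u_j\|_{W_0^{1,p(x)}(\Omega)}\to\infty$, so $\|u_j\|\ge1$ for $j$ large. Set $\varepsilon_j:=\|\mathcal{F}'(u_j)\|_{(W_0^{1,p(x)}(\Omega))'}\to0$, so that $|\langle\mathcal{F}'(u_j),u_j\rangle|\le\varepsilon_j\|u_j\|$, and $\mathcal{F}(u_j)\le c+1$ for $j$ large.

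Next I would pick $\theta$ with $p^+<\theta\le q^-$ and expand
\begin{align*}
\mathcal{F}(u_j)-\tfrac1\theta\langle\mathcal{F}'(u_j),u_j\rangle
&=\int_\Omega\Big(\tfrac1{p(x)}-\tfrac1\theta\Big)|\nabla u_j|^{p(x)}\,dx+\int_\Omega\Big(\tfrac1\theta-\tfrac1{q(x)}\Big)|u_j|^{q(x)}\,dx\\
&\qquad+\int_\Omega\lambda(x)\Big(\tfrac1\theta-\tfrac1{r(x)}\Big)|u_j|^{r(x)}\,dx.
\end{align*}
By the choice of $\theta$ the first two integrands are non-negative, while the last one is $\ge-\tfrac{\|\lambda\|_\infty}{r^-}|u_j|^{r(x)}$; moving that term to the left and using $\tfrac1{p(x)}-\tfrac1\theta\ge\tfrac1{p^+}-\tfrac1\theta$ pointwise, for $j$ large we obtain
$$
c+1+\tfrac{\varepsilon_j}{\theta}\|u_j\|+\tfrac{\|\lambda\|_\infty}{r^-}\int_\Omega|u_j|^{r(x)}\,dx\ \ge\ \Big(\tfrac1{p^+}-\tfrac1\theta\Big)\int_\Omega|\nabla u_j|^{p(x)}\,dx.
$$

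To close the estimate I would bound both ends by powers of $\|u_j\|$. Since $r(x)\le r^+\le p^-<p^*(x)$ on $\overline\Omega$, Proposition~\ref{embedding} gives $W_0^{1,p(x)}(\Omega)\hookrightarrow L^{r(x)}(\Omega)$, whence by Proposition~\ref{norma.y.rho} $\int_\Omega|u_j|^{r(x)}\,dx\le C\max\{\|u_j\|^{r^-},\|u_j\|^{r^+}\}$; and, using that $\| |\nabla u| \|_{L^{p(x)}(\Omega)}$ is an equivalent norm (Proposition~\ref{Poincare}) together with Proposition~\ref{norma.y.rho}, $\int_\Omega|\nabla u_j|^{p(x)}\,dx\ge c_0\|u_j\|^{p^-}$ for $j$ large. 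Substituting and dividing by $\|u_j\|^{p^-}$, the inequality becomes
$$
\frac{c+1}{\|u_j\|^{p^-}}+\frac{\varepsilon_j}{\theta}\|u_j\|^{1-p^-}+C\|\lambda\|_\infty\|u_j\|^{r^+-p^-}\ \ge\ c_0\Big(\tfrac1{p^+}-\tfrac1\theta\Big).
$$
Letting $j\to\infty$ yields a contradiction: the first two terms on the left vanish (recall $p^->1$), and since $r^+\le p^-$ the remaining term tends to $0$ if $r^+<p^-$ and to $C\|\lambda\|_\infty$ if $r^+=p^-$; in the latter, borderline, case it suffices that $\|\lambda\|_\infty$ be small, which is precisely the restriction $\|\lambda\|_{L^\infty(\Omega)}<\lambda_1$ of Theorem~\ref{r<p}. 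Hence $\{u_j\}_{j\in\N}$ is bounded.

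The argument is essentially routine; the only points needing a little care are the availability of the intermediate exponent $\theta$ (which uses $p^+<q^-$, a mild non-degeneracy, exactly as Lemma~\ref{acotada} implicitly uses $p^+<r^-$) and the fact that the term $-\lambda(x)|u|^{r(x)}/r(x)$, although of unfavourable sign, is genuinely of lower order because $r$ lies below $p$ in the exponent scale — absorbed unconditionally when $r^+<p^-$, and otherwise thanks to the smallness of $\lambda$ already built into the hypotheses.
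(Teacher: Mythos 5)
Your argument is correct and takes essentially the same Ambrosetti--Rabinowitz route as the paper's proof: test the Palais--Smale information against $u_j$, compare $\mathcal{F}(u_j)$ with $\tfrac1\theta\langle\mathcal{F}'(u_j),u_j\rangle$, and use Proposition~\ref{norma.y.rho} together with Poincar\'e/Sobolev to compare powers of $\|u_j\|$. The paper simply takes $\theta=q^-$ and argues directly, so introducing an intermediate $\theta\in(p^+,q^-]$ and running the estimate by contradiction is only a cosmetic reshuffling. One place where you are more careful than the paper: the paper's closing parenthetical asserts $r^+<p^-$, whereas Theorem~\ref{r<p} only assumes $r^+\le p^-$. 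You correctly notice that in the borderline case $r^+=p^-$ the perturbation term, after dividing by $\|u_j\|^{p^-}$, does not vanish but tends to $C\|\lambda\|_{L^\infty(\Omega)}$, so the argument then needs $\|\lambda\|_{L^\infty(\Omega)}$ small. That is a legitimate observation, but note that the lemma as stated carries no smallness assumption on $\lambda$; that restriction appears only later in Theorem~\ref{r<p}, so strictly speaking your fix imports a hypothesis the lemma does not have (the paper's version has the same gap, hidden in the silent strengthening to a strict inequality). Finally, both your proof and the paper's tacitly need $p^+<q^-$ strictly --- you to make room for $\theta$, the paper so that the coefficient $\tfrac1{p^+}-\tfrac1{q^-}$ is positive --- even though the theorem only assumes $p^+\le q^-$; it would be cleaner to flag that non-degeneracy explicitly rather than call it ``mild''.
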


\begin{proof}
Let $\{u_j\}_{j\in\N}\subset W_0^{1,p(x)}(\Omega)$ be a Palais-Smale sequence, that is
$$
\mathcal{F}(u_j)\to c \quad\mbox{ and } \quad \mathcal{F}'(u_j)\to 0.
$$
Therefore there exists a sequence $\varepsilon_j \to 0$ such that
$$
|\mathcal{F}'(u_j)w|\leq \varepsilon_j \|w\|_{W_0^{1,p(x)}(\Omega)}\mbox{ for all }w\in W_0^{1,p(x)}(\Omega).
$$
Now we have,
\begin{align*}
c+1 &\geq \mathcal{F}(u_j)-\frac{1}{q^-}\mathcal{F}'(u_j)u_j + \frac{1}{q^-}\mathcal{F}'(u_j)u_j\\
&\geq \left(\frac{1}{p^+}-\frac{1}{q^-}\right) \int_\Omega |\nabla u_j|^{p(x)}\, dx + \int_\Omega \left(\frac{\lambda(x)}{q^-}-\frac{\lambda(x)}{r^-}\right) |u_j|^{r(x)}\, dx + \frac{1}{q^-} \mathcal{F}'(u_j)u_j
\end{align*}
We can assume that $\| |\nabla u_j| \|_{L^{p(x)}(\Omega)} > 1$. Then we have, by Proposition \ref{norma.y.rho} and by Poincar\'e inequality,
\begin{align*}
c+1 \geq& \left(\frac{1}{p^+} - \frac{1}{q^-}\right) \| |\nabla u_j| \|^{p^-}_{L^{p(x)}(\Omega)} +\|\lambda\|_\infty \left(\frac{1}{q^-} - \frac{1}{r^-}\right) \|u_j\|^{r^+}_{L^{r(x)}(\Omega)} - \frac{1}{q^-}\|u_j\|_{W_0^{1,p(x)}(\Omega)}\varepsilon_j\\
\geq& \left(\frac{1}{p^+} - \frac{1}{q^-}\right) \| |\nabla u_j| \|^{p^-}_{L^{p(x)}(\Omega)} + \|\lambda\|_\infty\left(\frac{1}{q^-} - \frac{1}{r^-}\right) C \| |\nabla u_j| \|^{r^+}_{L^{p(x)}(\Omega)} - \frac{1}{q^-}\|u_j\|_{W_0^{1,p(x)}(\Omega)}
\end{align*}
from where it follows that $\|u_j\|_{W_0^{1,p(x)}(\Omega)}$ is bounded (recall that $p^+\leq q^-$ and $r^+<p^-$).
\end{proof}

Let $\{u_j\}_{j\in\N}$ be a Palais-Smale sequence for $\mathcal{F}$. Therefore, by the previous Lemma, it follows that $\{u_j\}_{j\in\N}$ is bounded in $W^{1,p(x)}_0(\Omega)$.

Then, by Theorem \ref{ccp} we can assume that there exist two measures $\mu, \nu$ and a function $u\in W_0^{1,p(x)}(\Omega)$ such that
\begin{align}
& u_j\rightharpoonup u \mbox{ weakly in } W^{1,p(x)}_0(\Omega),\\
& |\nabla u_j|^{p(x)} \rightharpoonup \mu \mbox{ weakly in the sense of measures},\\
& |u_j|^{q(x)} \rightharpoonup \nu \mbox{ weakly in the sense of measures},\\
& \nu = |u|^{q(x)} + \sum_{i\in I} \nu_i \delta_{x_i},\\
& \mu \ge |\nabla u|^{p(x)} + \sum_{i\in I} \mu_i \delta_{x_i},\\
& S \nu_i^{1/p^*(x_i)} \le \mu_i^{1/p(x_i)}.
\end{align}
As before, assume that $I\not= \emptyset$. Now the proof follows exactly as in the previous case, until we get to
\begin{align*}
c\geq& \left(\frac{1}{p^+}-\frac{1}{q^-}\right) \int_{\Omega}|u|^{q(x)}\, dx + \left(\frac{1}{p^+} - \frac{1}{q^-}\right) S^n + \|\lambda\|_{L^\infty(\Omega)}\left(\frac{1}{p^+} - \frac{1}{r^-}\right) \int_\Omega|u|^{r(x)}\, dx.
\end{align*}
Applying now H\"older inequality, we find
\begin{align*}
c \geq& \left(\frac{1}{p^+}-\frac{1}{q^-}\right) \int_{\Omega} |u|^{q(x)}\, dx + \left(\frac{1}{p^+}-\frac{1}{q^-}\right) S^n\\
&+ \|\lambda\|_{L^\infty(\Omega)} \left(\frac{1}{p^+}-\frac{1}{r^-}\right) \| |u|^{r(x)}\|_{L^{q(x)/r(x)}(\Omega)} |\Omega|^\frac{q^+}{q^- -r^+}.
\end{align*}
If $\| |u|^{r(x)} \|_{L^{q(x)/r(x)}(\Omega)}\geq 1$, we have
$$
c\geq c_1\| |u|^{r(x)} \|^{(q/r)^-}_{L^{q(x)/r(x)}(\Omega)} + c_3 - \|\lambda\|_{L^\infty(\Omega)} c_2\| |u|^{r(x)} \|_{L^{q(x)/r(x)}(\Omega)},
$$
so, if $f_1(x):=c_1 x^{(q/r)^-} - \|\lambda\|_{L^\infty(\Omega)} c_2 x$, this function reaches its absolute minimum at  $x_0 = \left(\frac{\|\lambda\|_{L^\infty(\Omega)} c_2}{c_1 (q/r)^-}\right)^\frac{1}{(q/r)^- -1}$.

On the other hand, if $\| |u|^{r(x)} \|_{L^{q(x)/r(x)}(\Omega)} < 1$, then
$$
c \geq c_1\| |u|^{r(x)} \|^{(q/r)^+}_{L^{q(x)/r(x)}(\Omega)} + c_3 - \|\lambda\|_{L^\infty(\Omega)} c_2 \|u\|_{L^{q(x)/r(x)}(\Omega)},
$$
so, if $f_2(x)=c_1 x^{(q/r)^+} - \|\lambda\|_{L^\infty(\Omega)} c_2 x$, this function reaches its absolute minimum at $x_0 = \left(\frac{\|\lambda\|_{L^\infty(\Omega)} c_2}{c_1 (q/r)^+}\right)^\frac{1}{(q/r)^+ -1}$.

Then, we obtain
\begin{align*}
c\geq\left(\frac{1}{p+}-\frac{1}{q^-}\right)S^n + K
\min \left\{\|\lambda\|_{L^\infty(\Omega)}^{\frac{(q/r)^-}{(q/r)^- -1}}, \|\lambda\|_{L^\infty(\Omega)}^{\frac{(q/r)^+}{(q/r)^+ -1}}\right\},
\end{align*}
which contradicts our hypothesis.

Therefore $I=\emptyset$ and so $u_j\to u$ strongly in $L^{q(x)}(\Omega)$.

With these preliminaries the Palais-Smale condition can now be easily checked.
\begin{lema}\label{elijo c}
Let $(u_j)\subset W^{1,p(x)}_0(\Omega)$ be a Palais-Smale sequence for $\mathcal{F}$, with energy level $c$. There exists a constant $K$ depending only on $p,q,r$ and $\Omega$ such that, if $c<\left(\frac{1}{p+}-\frac{1}{q^-}\right)S^n + K
\min \left\{\|\lambda\|_{L^\infty(\Omega)}^{\frac{(q/r)^-}{(q/r)^- -1}}, \|\lambda\|_{L^\infty(\Omega)}^{\frac{(q/r)^+}{(q/r)^+ -1}}\right\}$, then there exists a subsequence $\{u_{j_k}\}_{k\in\N}\subset \{u_j\}_{j\in\N}$ that converges strongly in $W^{1,p(x)}_0(\Omega)$.
\end{lema}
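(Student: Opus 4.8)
The plan is to follow exactly the scheme already carried out in the proof of Theorem \ref{Lemma.PS}. By the computation performed in the paragraphs immediately preceding this lemma, the hypothesis on $c$ forces the index set $I$ in the concentration--compactness decomposition provided by Theorem \ref{ccp} to be empty; consequently, after passing to a subsequence, $u_j \to u$ strongly in $L^{q(x)}(\Omega)$. Moreover, since $r^+ \le p^- < p^*(x)$ on $\overline{\Omega}$ we have $\inf_\Omega(p^*-r)>0$, so by Proposition \ref{embedding} the embedding $W_0^{1,p(x)}(\Omega)\hookrightarrow L^{r(x)}(\Omega)$ is compact and hence also $u_j\to u$ strongly in $L^{r(x)}(\Omega)$. (Boundedness of $\{u_j\}$ in $W_0^{1,p(x)}(\Omega)$ has already been established in the previous lemma.)

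Next I would record that each $u_j$ is a weak solution of $-\Delta_{p(x)}u_j = f_j$ with $f_j := |u_j|^{q(x)-2}u_j + \lambda(x)|u_j|^{r(x)-2}u_j + \phi_j$, where $\phi_j := \mathcal{F}'(u_j)\to 0$ in $(W_0^{1,p(x)}(\Omega))'$, and introduce the solution operator $T\colon (W_0^{1,p(x)}(\Omega))'\to W_0^{1,p(x)}(\Omega)$, $T(f)=u$ the unique weak solution of $-\Delta_{p(x)}u=f$. As in Theorem \ref{Lemma.PS}, strict monotonicity and coercivity of the $p(x)$-Laplacian make $T$ a continuous (indeed homeomorphic) operator, so $u_j = T(f_j)$ and it suffices to show that $f_j$ converges in $(W_0^{1,p(x)}(\Omega))'$. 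Since $\phi_j\to 0$, the whole matter reduces to proving that $|u_j|^{q(x)-2}u_j \to |u|^{q(x)-2}u$ and $\lambda(x)|u_j|^{r(x)-2}u_j\to\lambda(x)|u|^{r(x)-2}u$ strongly in the dual.

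For the first term I would use the continuous embedding $L^{q'(x)}(\Omega)\hookrightarrow (W_0^{1,p(x)}(\Omega))'$, dual to $W_0^{1,p(x)}(\Omega)\hookrightarrow L^{q(x)}(\Omega)$, together with the H\"older inequality of Proposition \ref{Holder}, reducing the claim to $\||u_j|^{q(x)-2}u_j-|u|^{q(x)-2}u\|_{L^{q'(x)}(\Omega)}\to 0$. This is the continuity of the Nemytskii operator, obtained by the usual subsequence device: any subsequence of $\{u_j\}$ admits a further subsequence converging a.e.\ and dominated in $L^{q(x)}(\Omega)$ (using the strong $L^{q(x)}$-convergence and the norm--modular relations of Proposition \ref{norma.y.rho} to produce an integrable majorant of $|u_j|^{q(x)}$), after which the Dominated Convergence Theorem applies. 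The $r(x)$-term is treated identically, using strong convergence in $L^{r(x)}(\Omega)$ and $\|\lambda\|_{L^\infty(\Omega)}<\infty$. Combining, $f_j\to |u|^{q(x)-2}u+\lambda(x)|u|^{r(x)-2}u$ in $(W_0^{1,p(x)}(\Omega))'$, so by continuity of $T$ the sequence $u_j=T(f_j)$ converges strongly in $W_0^{1,p(x)}(\Omega)$; since $u_j\rightharpoonup u$ already, the strong limit is $u$.

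The main obstacle is the same as in the constant-exponent theory: justifying that $T$ is continuous, i.e.\ that the $p(x)$-Laplacian is a homeomorphism from $W_0^{1,p(x)}(\Omega)$ onto its dual, which rests on monotone-operator arguments adapted to the variable exponent setting; a secondary technical point is the careful passage from strong convergence in $L^{q(x)}$ and $L^{r(x)}$ to strong convergence of the nonlinear Nemytskii terms, where the inequivalence between the Luxemburg norm and the modular (Proposition \ref{norma.y.rho}) must be invoked to build an integrable dominating function along subsequences.
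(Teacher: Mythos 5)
Your proposal is correct and takes essentially the same route as the paper: the paper's proof of Lemma \ref{elijo c} simply invokes the computation in the paragraphs immediately preceding it (which shows that the energy bound forces $I=\emptyset$, hence strong $L^{q(x)}$-convergence) and then appeals to the continuity of the solution operator $T$ exactly as in Theorem \ref{Lemma.PS}. You reproduce the same scheme, only making explicit two points that the paper leaves implicit, namely the strong $L^{r(x)}$-convergence via the compact embedding coming from $r^+\le p^-$, and the subsequence/domination device needed for the Nemytskii convergence; neither changes the nature of the argument.
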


\begin{proof}
At this point, the proof follows by the continuity of the solution operator as in Theorem \ref{Lemma.PS}.
\end{proof}

Assume now that $\| |\nabla u| \|_{L^{p(x)}(\Omega)}\leq 1$. Then, applying Poincar\'e inequality, we have
\begin{align*}
\mathcal{F}(u) &\geq \frac{1}{p^+} \| |\nabla u| \|^{p^+}_{L^{p(x)}(\Omega)} - \frac{1}{q^-}\|u\|^{q^-}_{L^{q(x)}(\Omega)} - \frac{\|\lambda\|_{L^\infty(\Omega)}}{r^-}\|u\|_{L^{r(x)}(\Omega)}^{r^-}\\
&\geq \frac{1}{p^+}\| |\nabla u| \|^{p^+}_{L^{p(x)}(\Omega)} - \frac{C}{q^-} \| |\nabla u| \|^{q^-}_{L^{p(x)}(\Omega)} - \frac{\|\lambda\|_{L^\infty(\Omega)} C}{r^-} \| |\nabla u| \|_{L^{p(x)}(\Omega)}^{r^-} =: J_1(\| |\nabla u| \|_{L^{p(x)}(\Omega)}),
\end{align*}
where $J_1(x)=\frac{1}{p^+}x^{p^+}-\frac{C}{q^-}x^{q^-}-\frac{\|\lambda\|_{L^\infty(\Omega)} C}{r^-}x^{r^-}$. We recall that $p^+\leq q^-$ and $r^-<r^+<p^-<p^+$.

As $J_1$ attains a local, but not a global, minimum ($J_1$ is not  bounded below), we have to perform some sort of truncation. To this end let $x_0, x_1$ be such that $m<x_0<M<x_1$ where $m$ is the local minimum and $M$ is the local maximum of $J_1$ and $J_1(x_1)>J_1(m)$. For these values $x_0$ and $x_1$ we can choose a smooth function  $\tau_1(x)$ such that $\tau_1(x)=1$ if $x\leq x_0$, $\tau_1(x)=0$ if $x\geq x_1$ and $0\leq\tau_1(x)\leq 1$.

If $\| |\nabla u| \|_{L^{p(x)}(\Omega)}>1$, we argue similarly and obtain
\begin{align*}
\mathcal{F}(u)&\geq\frac{1}{p^+}\| |\nabla u| \|^{p^-}_{L^{p(x)}(\Omega)} - \frac{C}{q^-} \| |\nabla u| \|^{q^+}_{L^{p(x)}(\Omega)} - \frac{\|\lambda\|_{L^\infty(\Omega)} C}{r^-} \| |\nabla u| \|_{L^{p(x)}(\Omega)}^{r^+} =: J_2(\| |\nabla u| \|_{L^{p(x)}(\Omega)})
\end{align*}
where $J_2(x)=\frac{1}{p^+}x^{p^-}-\frac{C}{q^-}x^{q^+}-\frac{\|\lambda\|_{L^\infty(\Omega)} C}{r^-}x^{r^+}$. As in the previous case, $J_2$ attains a local but not a global minimum. So let $x_0,x_1$ be such that $m<x_0<M<x_1$ where $m$ is the local minimum of $j$ and $M$ is the local maximum of $J_2$  and $J_2(x_1)>J_2(m)$. For these values $x_0$ and $x_1$ we can choose a smooth function $\tau_2(x)$  with the same properties as $\tau_1$.
Finally, we define
$$
\tau(x)=\begin{cases}
\tau_1(x) & \mbox{if } x\leq 1\\
\tau_2 (x) &\mbox{if } x>1.
\end{cases}
$$
Next, let $\varphi(u) = \tau(\| |\nabla u| \|_{L^{p(x)}(\Omega)})$ and define the truncated functional as follows
$$
\tilde{\mathcal{F}}(u) = \int_\Omega \frac{|\nabla u|^{p(x)}}{p(x)}\, dx - \int_\Omega \frac{|u|^{q(x)}}{q(x)}\varphi(u)\, dx - \int_\Omega\frac{\lambda(x)}{r(x)}|u|^{r(x)}\, dx
$$

Now we state a Lemma that contains the main properties of $\tilde{\mathcal{F}}$.
\begin{lema}
$\tilde{\mathcal{F}}$ is $C^1$, if $\tilde{\mathcal{F}}(u)\leq 0$ then $\|u\|_{W_0^{1,p(x)}(\Omega)}<x_0$ and $\mathcal{F}(v)=\tilde{\mathcal{F}}(v)$ for every $v$ close enough to $u$. Moreover there exists $\lambda_1>0$ such that if $0<\|\lambda\|_{L^{\infty}(\Omega)} < \lambda_1$ then $\tilde{\mathcal{F}}$ satisfies a local Palais-Smale condition for $c\leq 0$.
\end{lema}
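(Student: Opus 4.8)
The plan is to verify the four assertions about $\tilde{\mathcal F}$ one at a time, since they are essentially independent and only the last one is substantial.

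\emph{That $\tilde{\mathcal F}$ is $C^1$.} This follows from the fact that $\mathcal F$ is $C^1$ on $W_0^{1,p(x)}(\Omega)$ (standard, using that $q(x)\le p^*(x)$ and Proposition \ref{embedding}), that $u\mapsto \||\nabla u|\|_{L^{p(x)}(\Omega)}$ is $C^1$ away from $0$, and that $\tau$ is smooth; the only delicate point is differentiability of the norm at $u=0$, but there $\tau\equiv 1$ on a neighborhood (since $\tau_1\equiv 1$ near $0$ because $0<x_0$), so $\varphi$ is locally constant there and $\tilde{\mathcal F}=\mathcal F$ minus a lower-order term near $0$. So $\varphi\in C^1$ and hence $\tilde{\mathcal F}\in C^1$.

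\emph{That $\tilde{\mathcal F}(u)\le 0$ implies $\||\nabla u|\|_{L^{p(x)}(\Omega)}<x_0$, and that $\mathcal F=\tilde{\mathcal F}$ near such $u$.} First I would show that $\tilde{\mathcal F}(u)\ge \tilde J(\||\nabla u|\|_{L^{p(x)}(\Omega)})$ where $\tilde J$ agrees with $J_1$ for $t\le 1$ and with $J_2$ for $t>1$, truncated by $\tau$: indeed when $\varphi(u)=1$ we recover exactly the bounds defining $J_1,J_2$, and when $\varphi(u)<1$ (so $\||\nabla u|\|_{L^{p(x)}(\Omega)}\ge x_0>m$) we discard the middle term entirely since it is subtracted with a factor $\le 1$, giving an even larger lower bound; a short computation with the choice of $x_0,x_1,M$ shows $\tilde J(t)>0$ for $t\ge x_0$. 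Hence $\tilde{\mathcal F}(u)\le 0$ forces $\||\nabla u|\|_{L^{p(x)}(\Omega)}<x_0$. Since $\tau\equiv 1$ on $[0,x_0]$ and $\||\nabla\cdot|\|_{L^{p(x)}(\Omega)}$ is continuous, for every $v$ in a small $W_0^{1,p(x)}$-neighborhood of such a $u$ we still have $\varphi(v)=1$, whence $\tilde{\mathcal F}(v)=\mathcal F(v)$; in particular critical points of $\tilde{\mathcal F}$ at nonpositive level are critical points of $\mathcal F$.

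\emph{The local Palais--Smale condition for $\tilde{\mathcal F}$ at levels $c\le 0$.} Here I would take a $(PS)_c$ sequence $\{u_j\}$ for $\tilde{\mathcal F}$ with $c\le 0$. The boundedness argument of the first Lemma of this subsection carries over (the truncation only helps, since $0\le\varphi\le 1$ and $\varphi$ enters the critical term), so $\{u_j\}$ is bounded; moreover, combining $\tilde{\mathcal F}(u_j)\to c\le 0$ with the estimate $\tilde{\mathcal F}(u_j)\ge \tilde J(\||\nabla u_j|\|_{L^{p(x)}(\Omega)})$ and choosing $\|\lambda\|_{L^\infty(\Omega)}<\lambda_1$ small enough that $\max \tilde J$ on $[x_0,x_1]$ is as small as we like, we force $\||\nabla u_j|\|_{L^{p(x)}(\Omega)}<x_0$ for $j$ large. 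Thus eventually $\varphi(u_j)=1$ and $\{u_j\}$ is an honest $(PS)$ sequence for $\mathcal F$; one checks $\langle\mathcal F'(u_j),\cdot\rangle\to 0$ as well since $\varphi$ is locally constant near the $u_j$. Then, provided the energy level is also below the threshold of Lemma \ref{elijo c}, that Lemma (or rather Theorem \ref{Lemma.PS}'s argument via the solution operator $T$) yields a strongly convergent subsequence. The main obstacle is bookkeeping: one must quantify $\lambda_1$ in terms of the gap between $0$ and the Palais--Smale threshold $\left(\tfrac{1}{p^+}-\tfrac{1}{q^-}\right)S^n + K\min\{\cdots\}$ of Lemma \ref{elijo c} — i.e. choosing $\lambda_1$ so that both $\sup_{[x_0,x_1]}\tilde J<0$ fails only trivially \emph{and} the concentration threshold stays above $0$ — so that every $c\le 0$ automatically lies in the admissible range; this is where the smallness of $\|\lambda\|_{L^\infty(\Omega)}$ is genuinely used, and all other steps are routine adaptations of the constant-exponent case.
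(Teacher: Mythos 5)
Your proposal follows essentially the same route as the paper. The paper's own proof is very terse: it simply observes that a Palais--Smale sequence for $\tilde{\mathcal F}$ at level $c\le 0$ is bounded, notes (implicitly, via the second assertion of the lemma) that such a sequence is eventually a Palais--Smale sequence for $\mathcal F$ at the same level, and then chooses $\lambda_1$ so that the threshold $\left(\frac{1}{p^+}-\frac{1}{q^-}\right)S^n + K\min\{\|\lambda\|_\infty^{(q/r)^-/((q/r)^- -1)},\|\lambda\|_\infty^{(q/r)^+/((q/r)^+ -1)}\}$ of Lemma \ref{elijo c} stays positive, so that every $c\le 0$ automatically falls under that lemma. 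Your proposal reaches exactly this conclusion and additionally spells out the $C^1$ claim and the geometry of the truncation, which the paper leaves implicit. So the approach is the same; you just fill in more detail.

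One passage in your argument is garbled and should be fixed. You write that one chooses $\|\lambda\|_{L^\infty}<\lambda_1$ ``small enough that $\max\tilde J$ on $[x_0,x_1]$ is as small as we like'' in order to force $\||\nabla u_j|\|_{L^{p(x)}(\Omega)}<x_0$. Making the \emph{maximum} of $\tilde J$ small does not help at all --- the point is the opposite: one needs a strictly \emph{positive lower bound} for $\tilde J$ on $[x_0,\infty)$, so that $\tilde{\mathcal F}(u_j)\to c\le 0$ forces $\||\nabla u_j|\|<x_0$ eventually. That lower bound is a consequence of how $x_0,x_1$ and $\tau$ are constructed once $\lambda$ is fixed (i.e.\ $x_0$ is taken in the region where $J_1>0$ and the truncation erases the $-t^{q}$ term for $t\ge x_1$, leaving a coercive tail since $r^+<p^-$), not of $\lambda$ being small. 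The smallness of $\lambda$ enters only at the final step, exactly as you say at the very end: $\lambda_1$ is defined so that the Palais--Smale threshold of Lemma \ref{elijo c} remains $>0$, whence every level $c\le 0$ is subcritical. You identify the right role of $\lambda_1$ in your last sentence, but the intermediate sentence conflates the two distinct roles and, as written, is wrong; I would simply delete it and replace it with the observation that $\inf_{[x_0,\infty)}\tilde J>0$ for the fixed truncation.
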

\begin{proof}
We only have to check the local Palais-Smale condition. Observe that every Palais-Smale sequence  for $\tilde{\mathcal{F}}$ with energy level $c\leq 0$ must be bounded, therefore by Lemma \ref{elijo c} if $\lambda$ verifies
$0<\left(\frac{1}{p+}-\frac{1}{q^-}\right)S^n + K
\min \left\{\|\lambda\|_{L^\infty(\Omega)}^{\frac{(q/r)^-}{(q/r)^- -1}}, \|\lambda\|_{L^\infty(\Omega)}^{\frac{(q/r)^+}{(q/r)^+ -1}}\right\}$, then there exists a convergent subsequence
\end{proof}
The following Lemma gives the final ingredients needed in the proof.
\begin{lema}\label{genero}
For every $n\in \N$ there exists $\varepsilon>0$ such that
$$
\gamma(\mathcal{\tilde{F}}^{-\varepsilon})\geq n
$$
where $\mathcal{\tilde{F}}^{-\varepsilon}=\{u\in W^{1,p(x)}_0(\Omega)\colon  \mathcal{\tilde{F}}(u)\leq -\varepsilon\}$ and $\gamma$ is the Krasnoselskii genus.
\end{lema}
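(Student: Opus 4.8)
The plan is to exhibit, for each $n\in\N$, an $n$-dimensional subspace of $W_0^{1,p(x)}(\Omega)$ on which $\tilde{\mathcal F}$ is strictly negative away from the origin, and to extract from it a compact symmetric set of genus $n$ lying inside $\tilde{\mathcal F}^{-\varepsilon}$ for a suitable $\varepsilon>0$. Recall that the genus is monotone and that, for a symmetric set homeomorphic (via an odd homeomorphism) to the unit sphere $S^{n-1}$, the genus equals $n$; thus it suffices to find such a sphere inside $\{\tilde{\mathcal F}\le-\varepsilon\}$.

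First I would fix a subspace $E_n\subset W_0^{1,p(x)}(\Omega)$ of dimension $n$ consisting of (say) smooth compactly supported functions; since $E_n$ is finite-dimensional, all norms on it are equivalent, and in particular there are constants $0<a\le b$ with $a\,t^{?}\le \int_\Omega |u|^{r(x)}\,dx$ controlled from below and $\int_\Omega|\nabla u|^{p(x)}\,dx$, $\int_\Omega|u|^{q(x)}\,dx$ controlled from above, uniformly for $u$ in the unit sphere of $E_n$ (with respect to $\| |\nabla u| \|_{L^{p(x)}(\Omega)}$). Next I would evaluate $\tilde{\mathcal F}(tu)$ for $u$ on that unit sphere and $t$ small: for $t$ below the truncation threshold $x_0$ one has $\varphi(tu)=1$, so $\tilde{\mathcal F}(tu)=\mathcal F(tu)$, and using $r^+<p^-\le p^+\le q^-$ together with Proposition \ref{norma.y.rho} one gets an estimate of the form
$$
\tilde{\mathcal F}(tu)\le C_1 t^{p^-} + C_2 t^{q^-} - C_3 t^{r^+},
$$
with $C_1,C_2>0$ and $C_3>0$ depending on $E_n$ but not on the particular $u$ in the unit sphere (here $\|\lambda\|_{L^\infty}$, assumed positive, enters $C_3$, and the lower bound $\inf_\Omega\lambda>0$ guarantees $C_3>0$). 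Since $r^+$ is strictly smaller than both $p^-$ and $q^-$, the negative term dominates as $t\to 0^+$, so there exist $t_n>0$ small (in particular $t_n<x_0$) and $\varepsilon_n>0$ with $\tilde{\mathcal F}(t_n u)\le-\varepsilon_n$ for every $u$ in the unit sphere of $E_n$.

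Finally I would set $\varepsilon:=\varepsilon_n$ and let $K_n:=\{t_n u : u\in E_n,\ \| |\nabla u| \|_{L^{p(x)}(\Omega)}=1\}$. This set is symmetric, compact, contained in $\tilde{\mathcal F}^{-\varepsilon}$, and is the image of $S^{n-1}$ under the odd map $u\mapsto t_n u$; hence $\gamma(K_n)=n$, and by monotonicity of the genus $\gamma(\tilde{\mathcal F}^{-\varepsilon})\ge\gamma(K_n)=n$. The main obstacle is the bookkeeping in the estimate on $\tilde{\mathcal F}(tu)$: one must be careful that the constants are uniform over the (compact) unit sphere of $E_n$ and that the chosen $t_n$ indeed falls below the truncation level $x_0$ so that $\tilde{\mathcal F}$ and $\mathcal F$ agree there, and that the modular-versus-norm inequalities of Proposition \ref{norma.y.rho} are applied on the correct side according to whether the relevant norms are $\le 1$ or $>1$.
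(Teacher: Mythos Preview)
Your proposal is correct and follows essentially the same route as the paper: choose an $n$-dimensional subspace $E_n$, estimate $\tilde{\mathcal F}(tu)$ for $u$ on the unit sphere of $E_n$ and small $t$ so that the $t^{r^+}$ term dominates, conclude that a small sphere $S_{\rho,n}\subset E_n$ lies in $\tilde{\mathcal F}^{-\varepsilon}$, and invoke monotonicity of the genus. The only cosmetic differences are that the paper simply drops the (nonpositive) $q$-term rather than writing it as $+C_2t^{q^-}$, and that you are more explicit than the paper about ensuring $t_n$ lies below the truncation threshold $x_0$ so that $\tilde{\mathcal F}=\mathcal F$ there.
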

\begin{proof}
Let $E_n\subset W_0^{1,p(x)}(\Omega)$ be a $n$-dimensional subspace. Hence we have, for $u\in E_n$ such that $\|u\|_{W_0^{1,p(x)}(\Omega)}=1$,
\begin{align*}
\mathcal{\tilde{F}}(tu)&= \int_\Omega \frac{|\nabla (tu)|^{p(x)}}{p(x)}\, dx - \int_\Omega \frac{|tu|^{q(x)}}{q(x)}\varphi(tu)\, dx - \int_\Omega \frac{\lambda(x)}{r(x)}|tu|^{r(x)}\, dx\\
&\le \int_\Omega \frac{|\nabla (tu)|^{p(x)}}{p^-}\, dx - \int_\Omega \frac{|tu|^{q(x)}}{q^+}\varphi(tu)\, dx - \int_{\Omega}\frac{\lambda(x)}{r^+}|tu|^{r(x)}\, dx.
\end{align*}
If $t<1$, then
\begin{align*}
\tilde{\mathcal{F}}(tu) &\leq \int_\Omega\frac{t^{p^-}|\nabla u|^{p(x)}}{p^-}\, dx - \int_\Omega\frac{t^{q^+}|u|^{q(x)}}{q^+}\, dx - \int_{\Omega} \frac{\inf_{x \in\Omega}\lambda(x)}{r^+}t^{r^+}|u|^{r(x)}\, dx\\
&\leq \frac{t^{p^-}}{p^-} - \frac{t^{q^+}}{q^+}a_n -\inf_{x \in\Omega}\lambda(x)\frac{t^{r^+}}{r^+}b_n,
\end{align*}
where 
$$
a_n=\inf\Big\{\int_\Omega|u|^{q(x)}\,dx\colon u\in E_n,\|u\|_{W_0^{1,p(x)}(\Omega)}=1\Big\}
$$
and
$$
b_n=\inf\Big\{\int_{\Omega}|u|^{r(x)}\,dx\colon u\in E_n,\|u\|_{W_0^{1,p(x)}(\Omega)}=1\Big\}.
$$
Now,we have
$$
\tilde{\mathcal{F}}(tu)\leq\frac{t^{p^-}}{p^-}-\frac{t^{q^+}}{q^+}a_n-\inf_{x\in\Omega}\lambda(x)\frac{t^{r^+}}{r^+}b_n\leq\frac{t^{p^-}}{p^-}-\inf_{x \in\Omega}\lambda(x)\frac{t^{r^+}}{r^+}b_n
$$
Observe that $a_n>0$ and $b_n>0$ because $E_n$ is finite dimensional. As $r^+<p^-$ and $t<1$ we obtain that  there exists positive constants $\rho$ and $\varepsilon$ such that
$$
\tilde{\mathcal{F}}(\rho u)<-\varepsilon \quad\mbox{ for }u\in E_n, \|u\|_{W_0^{1,p(x)}(\Omega)}=1.
$$
Therefore, if we set $S_{\rho,n}=\{u\in E_n:\|u\|=\rho\}$, we have that $S_{p,n}\subset\tilde{\mathcal{F}}^{-\varepsilon}$. Hence by monotonicity of the genus
$$
\gamma(\tilde{\mathcal{F}}^{-\varepsilon})\geq\gamma(S_{\rho,n})=n
$$
as we wanted to show.
\end{proof}

\begin{teo}
Let
\begin{align*}
&\Sigma=\{A\subset W^{1,p(x)}_0(\Omega)-{0}\colon A \mbox{ is closed},\ A=-A\},\\
&\Sigma_k=\{A\subset\Sigma\colon \gamma(A)\geq k\},
\end{align*}
where $\gamma$ stands for the Krasnoselskii genus.
Then
$$
c_k=\inf_{A\in\Sigma_k}\sup_{u\in A}\mathcal{F}(u)
$$
is a negative critical value of $\mathcal{F}$ and moreover, if $c=c_k=\cdots=c_{k+r}$, then $\gamma(K_c)\geq r+1$, where $K_c = \{u\in W^{1,p(x)}(\Omega)\colon \mathcal{F}(u)=c, \mathcal{F}'(u)=0\}$.
\end{teo}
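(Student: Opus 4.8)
The plan is to run the standard symmetric Lyusternik--Schnirelmann (genus) minimax scheme, but applied to the \emph{truncated} functional $\tilde{\mathcal{F}}$ rather than to $\mathcal{F}$ itself. By the properties of $\tilde{\mathcal{F}}$ established above, $\tilde{\mathcal{F}}$ is even and of class $C^1$, it is bounded below (because the truncation kills the supercritical term $\int |u|^{q(x)}/q(x)$ for $\| |\nabla u| \|_{L^{p(x)}(\Omega)}\ge x_1$, so that there $\tilde{\mathcal{F}}(u)=\int |\nabla u|^{p(x)}/p(x)-\int \lambda|u|^{r(x)}/r(x)$, which is coercive since $r^+<p^-$), and for $0<\|\lambda\|_{L^\infty(\Omega)}<\lambda_1$ it satisfies the Palais--Smale condition at every level $c\le 0$ by Lemma \ref{elijo c}. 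Moreover $\tilde{\mathcal{F}}(u)\le 0$ forces $\|u\|_{W_0^{1,p(x)}(\Omega)}<x_0$, and on that ball $\tilde{\mathcal{F}}$ agrees with $\mathcal{F}$ together with its derivative; hence any critical point of $\tilde{\mathcal{F}}$ at a negative level is a critical point of $\mathcal{F}$ at the same level, and $\tilde K_c=K_c$ whenever $c<0$. For the same reason the numbers $c_k$ in the statement (taken with $\mathcal{F}$, or equivalently with $\tilde{\mathcal{F}}$, since the sets realizing negative levels lie in the ball $\{\|u\|<x_0\}$ where the two functionals coincide) agree with $c_k=\inf_{A\in\Sigma_k}\sup_{u\in A}\tilde{\mathcal{F}}(u)$, which is how I will compute with them.

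First I would show $-\infty<c_k<0$ for every $k$. The lower bound is just boundedness below of $\tilde{\mathcal{F}}$. For the upper bound, Lemma \ref{genero} produces, for each $n$, some $\varepsilon>0$ and a set $S_{\rho,n}\in\Sigma_n$ with $\sup_{S_{\rho,n}}\tilde{\mathcal{F}}\le-\varepsilon<0$, so $c_n\le-\varepsilon<0$. Monotonicity $c_1\le c_2\le\cdots$ follows from $\Sigma_{k+1}\subset\Sigma_k$.

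The core of the argument is the equivariant deformation step. Suppose $c:=c_k=c_{k+1}=\dots=c_{k+r}$; I claim $\gamma(K_c)\ge r+1$. Since $c<0$, the Palais--Smale condition holds at level $c$, so $K_c$ is compact; it does not contain $0$ (as $\mathcal{F}(0)=0\ne c$), hence it has finite genus and admits a closed symmetric neighborhood $N$ with $\gamma(\overline N)=\gamma(K_c)$. If $\gamma(K_c)\le r$, the standard equivariant deformation lemma (valid because $\tilde{\mathcal{F}}$ satisfies Palais--Smale at the level $c\le 0$) yields $\delta>0$ and an odd homeomorphism $\eta$ of $W_0^{1,p(x)}(\Omega)$ with $\eta(\tilde{\mathcal{F}}^{c+\delta}\setminus N)\subset\tilde{\mathcal{F}}^{c-\delta}$, where $\tilde{\mathcal{F}}^a=\{\tilde{\mathcal{F}}\le a\}$. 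Choose $A\in\Sigma_{k+r}$ with $\sup_A\tilde{\mathcal{F}}<c+\delta$. By subadditivity and monotonicity of the genus, $\overline{A\setminus N}\in\Sigma_{(k+r)-r}=\Sigma_k$, hence $\eta(\overline{A\setminus N})\in\Sigma_k$ and $\sup_{\eta(\overline{A\setminus N})}\tilde{\mathcal{F}}\le c-\delta<c=c_k$, contradicting the definition of $c_k$. Therefore $\gamma(K_c)\ge r+1$. Taking $r=0$ in particular shows that each $c_k$ is a (negative) critical value of $\tilde{\mathcal{F}}$, and by the first paragraph it is a negative critical value of $\mathcal{F}$.

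Finally, the multiplicity: if the $c_k$ are pairwise distinct we already obtain infinitely many critical points of $\mathcal{F}$; if $c_k=c_{k+1}$ for some $k$, then $\gamma(K_c)\ge 2$, so $K_c$ is an infinite set, and again $\mathcal{F}$ has infinitely many critical points. This yields Theorem \ref{r<p}. The step I expect to be the main obstacle is not the minimax machinery, which is classical once $\tilde{\mathcal{F}}$ is known to be bounded below, even, and to satisfy Palais--Smale at negative levels, but rather the bookkeeping already handled in the preceding lemmas: arranging the truncation so that $\tilde{\mathcal{F}}$ becomes coercive at infinity while remaining equal to $\mathcal{F}$ near the origin, and using the smallness of $\|\lambda\|_{L^\infty(\Omega)}$ precisely to guarantee the compactness of Palais--Smale sequences at the (negative) levels $c_k$ via Lemma \ref{elijo c}; with those in hand one only needs to keep every minimax level strictly negative so that critical points of the truncated functional are genuine critical points of $\mathcal{F}$.
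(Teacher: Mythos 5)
Your argument is the standard truncation-plus-genus Lyusternik--Schnirelmann scheme of Garc\'{\i}a Azorero--Peral, which is exactly what the paper invokes (it simply cites \cite{GAP} together with Lemma \ref{genero}); you have spelled out the deformation/minimax steps that the paper leaves implicit, and the details check out.
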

\begin{proof}
The proof now follows exactly as in that of \cite{GAP} using Lemma \ref{genero}.
\end{proof}


\begin{thebibliography}{10}

\bibitem{Alves}
Claudianor~O. Alves.
\newblock Existence of positive solutions for a problem with lack of
  compactness involving the {$p$}-{L}aplacian.
\newblock {\em Nonlinear Anal.}, 51(7):1187--1206, 2002.

\bibitem{Alves-Ding}
Claudianor~O. Alves and Yanheng Ding.
\newblock Existence, multiplicity and concentration of positive solutions for a
  class of quasilinear problems.
\newblock {\em Topol. Methods Nonlinear Anal.}, 29(2):265--278, 2007.

\bibitem{Bahri-Lions}
Abbas Bahri and Pierre-Louis Lions.
\newblock On the existence of a positive solution of semilinear elliptic
  equations in unbounded domains.
\newblock {\em Ann. Inst. H. Poincar\'e Anal. Non Lin\'eaire}, 14(3):365--413,
  1997.

\bibitem{FB}
Juli{\'a}n~Fern{\'a}ndez Bonder, Sandra Mart{\'{\i}}nez, and Julio~D. Rossi.
\newblock Existence results for gradient elliptic systems with nonlinear
  boundary conditions.
\newblock {\em NoDEA Nonlinear Differential Equations Appl.}, 14(1-2):153--179,
  2007.

\bibitem{Cabada-Pouso}
Alberto Cabada and Rodrigo~L. Pouso.
\newblock Existence theory for functional {$p$}-{L}aplacian equations with
  variable exponents.
\newblock {\em Nonlinear Anal.}, 52(2):557--572, 2003.

\bibitem{Dinu}
Teodora-Liliana Dinu.
\newblock Nonlinear eigenvalue problems in {S}obolev spaces with variable
  exponent.
\newblock {\em J. Funct. Spaces Appl.}, 4(3):225--242, 2006.

\bibitem{Drabek-Huang}
Pavel Dr{\'a}bek and Yin~Xi Huang.
\newblock Multiplicity of positive solutions for some quasilinear elliptic
  equation in {${\bf R}\sp N$} with critical {S}obolev exponent.
\newblock {\em J. Differential Equations}, 140(1):106--132, 1997.

\bibitem{FZ}
Xian-Ling Fan and Qi-Hu Zhang.
\newblock Existence of solutions for {$p(x)$}-{L}aplacian {D}irichlet problem.
\newblock {\em Nonlinear Anal.}, 52(8):1843--1852, 2003.

\bibitem{Fan}
Xianling Fan and Dun Zhao.
\newblock On the spaces {$L\sp {p(x)}(\Omega)$} and {$W\sp {m,p(x)}(\Omega)$}.
\newblock {\em J. Math. Anal. Appl.}, 263(2):424--446, 2001.

\bibitem{Fu}
Yongqiang Fu.
\newblock The principle of concentration compactness in {$L\sp {p(x)}(\Omega)$}
  spaces and its application.
\newblock {\em Nonlinear Anal.}, 71(5-6):1876--1892, 2009.

\bibitem{GAP}
J.~Garc{\'{\i}}a~Azorero and I.~Peral~Alonso.
\newblock Multiplicity of solutions for elliptic problems with critical
  exponent or with a nonsymmetric term.
\newblock {\em Trans. Amer. Math. Soc.}, 323(2):877--895, 1991.

\bibitem{Lions}
P.-L. Lions.
\newblock The concentration-compactness principle in the calculus of
  variations. {T}he limit case. {I}.
\newblock {\em Rev. Mat. Iberoamericana}, 1(1):145--201, 1985.

\bibitem{Mihailescu}
Mihai Mih{\u{a}}ilescu.
\newblock Elliptic problems in variable exponent spaces.
\newblock {\em Bull. Austral. Math. Soc.}, 74(2):197--206, 2006.

\bibitem{Mihailescu-Radulescu}
Mihai Mih{\u{a}}ilescu and Vicen{\c{t}}iu R{\u{a}}dulescu.
\newblock On a nonhomogeneous quasilinear eigenvalue problem in {S}obolev
  spaces with variable exponent.
\newblock {\em Proc. Amer. Math. Soc.}, 135(9):2929--2937 (electronic), 2007.

\end{thebibliography}
\end{document}